		\newtheorem{theorem}{Theorem}
		\newtheorem{proposition}[theorem]{Proposition}
		\newtheorem{lemma}[theorem]{Lemma}
		\newtheorem*{teorema}{Theorem}
                \newcommand{\keywords}[1]{\par\addvspace\baselineskip\noindent\textbf{Keywords:}\enspace\ignorespaces#1}
                \newcommand{\AMSclassification}[1]{\par\addvspace\baselineskip\noindent\textbf{Mathematical subject classification:}\enspace\ignorespaces#1}
       \title{Exponential rate of decay of correlations  of equilibrium states associated with non-uniformly \\expanding circle maps}
        \author{Eduardo Garibaldi
        \thanks{Supported by FAPESP grant 2019/10485-8.} 
        \\ 
        \small{Department of Mathematics, University of Campinas, 13083-859 Campinas, Brazil} 
        \\ \small{(email: garibaldi@ime.unicamp.br)} 
        \\ ~ 
        \\ Irene Inoquio-Renteria 
        \thanks{Supported by Mathamsud Project TOMCAT 22-MATH-10.}
                 \\ \small{Instituto de Ciencias Físicas y Matemáticas, Universidad Austral de Chile,}
                \\ \small{casilla 567 Valdivia, Chile} 
         \\ \small{(email: ireneinoquio@uach.cl)}}
\begin{document}
			\maketitle
			
			\begin{abstract}
			In the context of expanding maps of the circle with an indifferent fixed point, understanding the joint behavior of dynamics and pairs of moduli of continuity $ (\omega, \Omega) $ may be a useful element for the development of equilibrium theory. 
			Here we identify a particular feature of modulus $ \Omega $ (precisely $ \lim_{x \to 0^+}  \sup_{\mathsf d} \Omega\big({\mathsf d} x \big) / \Omega(\mathsf d) = 0 $)
			as a sufficient condition for the system to exhibit exponential decay of correlations with respect 
			to the unique equilibrium state associated with a potential having $ \omega $ as modulus of continuity.
			This result is derived from obtaining the spectral gap property for the transfer operator acting on the space of observables with $ \Omega $ as modulus of continuity, a property that, as is well known, also ensures the Central Limit Theorem.
			Examples of application of our results include the Manneville-Pomeau family

			\keywords{
			non-uniformly expanding dynamics,  
			transfer operators, spectral gap, modulus of continuity}
			\AMSclassification{37D25, 37E05, 37D35, 37A25, 26A15}
			
			\end{abstract}

\section{Introduction and Main Results}

In the framework of uniformly hyperbolic systems, the study of statistical properties of equilibrium states for H\"older potentials has very well established theoretical foundations \cite{Bal00, Bow75, PP90, VO16, Rue04}.
This research has been extended to multiple different scenarios, and advances beyond a uniformly expanding setting may be founded, for instance, in \cite{LSV98, You98, LSV99, You99, FL01, Sar02, Go04a, Go04b, Hol05, CV13, LR14a, LR14b, Klo20}. 

Expanding maps on the circle that possess an indifferent fixed point are the center of attention of this note. 
Our core contribution is to provide sufficient conditions to show, without any induction intervention, exponential decay of correlations for these discrete time systems. 
Essentially we highlight how the regularity classes of potentials and observables should relate to the dynamics, as we will detail below when recalling from \cite{GI22} the notion of dynamic compatibility between moduli of continuity.  
The main novelty is the understanding of the role of the hypothesis according to which the module of the observables must vanish orderly.
Our assumptions allow thus to follow a typical strategy as in the consecrated way for Axiom~A diffeomorphism and  H\"older potentials:  we focus on proving quasicompactness and a spectral gap for the associated transfer operator. 
The considered conditions are at the same time comprehensive and flexible so that one can easily discuss the results in meaningful situations.

Here the phase space consists of the circle $ \mathbb T = [0, 1) $ equipped with the standard metric $ d(x, y) = \min \{|x-y|, |x-y\pm1|\} $.
The dynamics is described by a continuous map $ T $ on $ \mathbb T $ 
of the form $ T(x) = x(1 + V(x)) \mod  1 $, 
where $ V : [0, \infty) \to [0, \infty) $, with $ V(1) $ a positive integer,  
is demanded to be a continuous and increasing function that obeys the following regularly varying property\footnote{For information on these functions, see \cite{Sen76}.}:
$$ \exists \, \sigma \geq 0 \quad \textrm{s.t.} \quad \lim_{x\to 0} \frac{V(tx)}{V(x)} = t^\sigma, \quad \forall \, t > 0. $$
We do not impose additional constraints on the regularity of
$ V $. 

We consider in this work continuous potentials $ f : \mathbb T \to \mathbb R $ with a particular modulus of continuity $ \omega $, namely, potentials $ f $ such that
$$ |f|_\omega := \sup_{x \neq y} \frac{|f(x) - f(y)|}{\omega(d(x,y))} < \infty, $$
where the continuous function $ \omega : [0,  \infty) \to [0, \infty) $, with $ \omega(0) = 0 $, is supposed to be non-decreasing.
Let $ \mathscr C_\omega(\mathbb T)  $ denote the linear space of real-valued continuous functions on $ \mathbb T $ that admit $ \omega $ as a modulus of continuity.

Let $ C(\mathbb T) $ be the space of real-valued continuous functions on $ \mathbb T $ equipped with the uniform norm $ \| \cdot \|_\infty $.
The transfer operator associated with a potential $ f \in \mathscr C_\omega(\mathbb T) $ is the bounded linear operator that acts on $ C(\mathbb T) $  as
$$ \mathscr L_f\phi(x):=\sum_{y\in T^{-1}(x)}e^{f(y)} \phi(y), \quad \quad \forall \; x \in \mathbb T, $$
for a given $ \phi \in C(\mathbb T) $. As is well known, if it is possible to find a positive eigenfunction $ h $ for $ \mathscr L_f $ and an eigenmeasure $ d\nu $ for its dual $ \mathscr L_f^*$ 
(both with respect to a common positive maximal eigenvalue $ \chi $), taking into account normalization, it is to be expected that the probability $ d\mu = h \, d\nu $ is an equilibrium state of the system:
among $T$-invariant Borel probability measures $ m $, it maximizes the quantity $ h_m(T) + \int f \, dm $, where $  h_m(T) $ is the Kolmogorov-Sinai entropy of $ (\mathbb T, T, m) $.
When this is the case, by the variational principle, the maximal value, $ h_\mu(T) + \int f \, d\mu $, equals the topological pressure of the system, which we denote by $ P(T, f) $ (for details, see \cite{Wal82}). 

In \cite{GI22}, we have identified an appropriate linear space $ \mathscr C_\Omega(\mathbb T) $ to look at the action of the transfer operator $ \mathscr L_f $, $ f \in \mathscr C_\omega(\mathbb T) $,
in search of a positive eigenfunction, which has enabled us to prove existence and uniqueness of the Gibbs-equilibrium state for the system. Without inducing, a direct Ruelle-Peron-Frobenius theorem for a non-uniformly hyperbolic system was thus obtained. 
The key property was the $T$-compatibility of a modulus $ \Omega $ with respect to $ \omega $. 
In this note, we show how this property can also be useful for directly studying the spectrum of the transfer operator without coding the system, a strategy that for hyperbolic dynamics was crystallized by \cite{BKL02}.

Recall first that by a pre-orbit of  $ x_0 $ we mean any sequence
$ \{x_k\}_{k \ge 0} $ such that $ T(x_{k+1}) = x_k $ for all $ k $.  
We say that a modulus of continuity $ \Omega $ is $T$-compatible with respect to a given modulus $ \omega $ when
there are positive constants $ \varrho_1 $ and $ C_1 $ such that, for any points $ x_0 $ and $ y_0 $ with $ d(x_0, y_0) < \varrho_1 $, 
there exists a bijection among respective pre-orbits $ \{x_k\} $ and $ \{y_k\} $ fulfilling for all $ k $
\begin{eqnarray*}
	d(x_k, y_k) \le d(x_0, y_0) < \varrho_1 \textrm{ and } 
\end{eqnarray*}
\begin{equation}\label{compatibilidade}
	C_1 \sum_{j=1}^k \omega(d(x_j, y_j)) \le \Omega(d(x_0, y_0)) - \Omega(d(x_k, y_k)).
\end{equation}
When $ \Omega $ is concave, a sufficient condition for $T$-compatibility (see \cite[Proposition~7]{GI22})  is 
\begin{equation}\label{condicao_suficiente}
	\liminf_{x \to 0^+} \frac{V(x)}{\omega(x)}(\Omega((1+c)x) - \Omega(x)) > 0
\end{equation}
for every small enough $ c > 0 $.

In the present note, we point out an additional property of a concave modulus $ \Omega $ under which, with respect to the equilibrium state $ d\mu = h \, d\nu $, the system satisfies 
two significant statistical properties: exponential decay of correlations and the Central Limit Theorem. 
To do so, we follow a traditional method \cite{Bal00, Bow75, PP90, VO16, Rue04} that consists of showing that the transfer operator $ \mathscr L_f $ fulfills the spectral gap property,
more precisely, the rest of spectrum of $ \mathscr L_f $  lies inside a disc with radius strictly less than its maximal eigenvalue $ \chi $. 

The additional attribute to be respected by a concave modulus $ \Omega $ is the following limit
$$ \lim_{x \to 0^+} \; \sup_{0 < \mathsf d < 1/2} \frac{\Omega\big({\mathsf d} x \big)}{\Omega(\mathsf d)} = 0. $$
Whenever this limit occurs, for brevity we will say that $ \Omega $ \emph{vanishes orderly}.
The proposition below (whose proof is straightforward and will be omitted) indicates that, from compatible moduli, as long as concavity is ensured, a new dynamically compatible pair of moduli can be obtained in a canonical way so that the vanishing property holds.
We will use it to provide examples in the last section.

\begin{proposition}\label{ganho}
	Suppose that $V, $ $ \omega_0 $ and $ \Omega_0 $ are nonnegative continuous functions, with $ \omega_0 $ and $ \Omega_0 $ non-decreasing. 
	If the triple $ (V, \omega_0, \Omega_0) $ satisfies condition~\ref{condicao_suficiente}, then the triple 
	$ (V, \omega_s, \Omega_s) $, where $ \omega_s(x) := x^s \omega_0(x) $ and $ \Omega_s(x) := x^s \Omega_0(x) $ for $ s > 0 $,
	also satisfies condition~\ref{condicao_suficiente} and $ \Omega_s $ vanishes orderly.
\end{proposition}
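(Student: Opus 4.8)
The proposition bundles two independent claims: first, that the rescaled triple $(V, \omega_s, \Omega_s)$ still obeys condition~\ref{condicao_suficiente}; and second, that $\Omega_s$ vanishes orderly. Both reduce to elementary manipulations exploiting the common factor $x^s$, so the plan is to treat them in turn by direct computation, checking in each case that the factor of $x^s$ interacts favorably with the monotonicity of $\Omega_0$.

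For the first claim, I would begin by simplifying the relevant quotient. Since $\omega_s(x) = x^s \omega_0(x)$ and $\Omega_s((1+c)x) - \Omega_s(x) = x^s\bigl((1+c)^s\,\Omega_0((1+c)x) - \Omega_0(x)\bigr)$, the power $x^s$ cancels and one is left with
\begin{equation*}
	\frac{V(x)}{\omega_s(x)}\bigl(\Omega_s((1+c)x) - \Omega_s(x)\bigr) = \frac{V(x)}{\omega_0(x)}\bigl((1+c)^s\,\Omega_0((1+c)x) - \Omega_0(x)\bigr).
\end{equation*}
The key observation is then that, since $(1+c)^s \ge 1$ for $s > 0$, $c > 0$ while $\Omega_0 \ge 0$, one has the pointwise bound $(1+c)^s\,\Omega_0((1+c)x) - \Omega_0(x) \ge \Omega_0((1+c)x) - \Omega_0(x)$. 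Multiplying by the nonnegative factor $V(x)/\omega_0(x)$ and taking $\liminf_{x \to 0^+}$ shows that the expression for $(V, \omega_s, \Omega_s)$ dominates the one for $(V, \omega_0, \Omega_0)$, which is positive by hypothesis; hence condition~\ref{condicao_suficiente} is inherited.

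For the second claim, I would compute the defining ratio directly, again factoring out the common power:
\begin{equation*}
	\frac{\Omega_s(\mathsf d\,x)}{\Omega_s(\mathsf d)} = \frac{(\mathsf d\,x)^s\,\Omega_0(\mathsf d\,x)}{\mathsf d^{\,s}\,\Omega_0(\mathsf d)} = x^s\,\frac{\Omega_0(\mathsf d\,x)}{\Omega_0(\mathsf d)}.
\end{equation*}
For $0 < x < 1$ and $0 < \mathsf d < 1/2$ one has $\mathsf d\,x < \mathsf d$, so the monotonicity of $\Omega_0$ gives $\Omega_0(\mathsf d\,x) \le \Omega_0(\mathsf d)$ and thus $\Omega_0(\mathsf d\,x)/\Omega_0(\mathsf d) \le 1$ uniformly in $\mathsf d$. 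Taking the supremum over $\mathsf d$ and then letting $x \to 0^+$ yields $\sup_{0 < \mathsf d < 1/2} \Omega_s(\mathsf d\,x)/\Omega_s(\mathsf d) \le x^s \to 0$, which is precisely the orderly vanishing of $\Omega_s$.

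There is no deep obstacle here — the content of the statement is really that this particular rescaling is the \emph{right} one — but two points deserve care. The uniform bound in the second claim tacitly requires $\Omega_0(\mathsf d) > 0$ for $0 < \mathsf d < 1/2$, which holds for a genuine modulus of continuity; if $\Omega_0$ were to vanish on an initial interval one would simply restrict to the subinterval where it is positive. Finally, although the proposition asserts only condition~\ref{condicao_suficiente} together with orderly vanishing, its intended application presupposes that $\Omega_s$ stay concave; since $x^s$ is concave precisely for $0 < s \le 1$, the admissible range of $s$ should be read alongside the concavity requirement emphasized in the surrounding discussion.
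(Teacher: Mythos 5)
Your proof is correct; the paper itself omits the proof of this proposition as straightforward, and your direct computation is exactly the intended one --- cancel the common factor $x^s$, use $(1+c)^s \ge 1$ together with $\Omega_0 \ge 0$ to inherit condition~\ref{condicao_suficiente}, and bound $\Omega_0(\mathsf{d}\,x)/\Omega_0(\mathsf{d}) \le 1$ by monotonicity to get $\sup_{0<\mathsf{d}<1/2}\Omega_s(\mathsf{d}\,x)/\Omega_s(\mathsf{d}) \le x^s \to 0$. One remark: the caveat you raise about $\Omega_0$ possibly vanishing near the origin is moot, since condition~\ref{condicao_suficiente} for $(V,\omega_0,\Omega_0)$ already forces $\Omega_0((1+c)x) > \Omega_0(x) \ge 0$ for all small $x$, and hence $\Omega_0 > 0$ on $(0,\infty)$ by monotonicity, so the ratio in the orderly-vanishing condition is always well defined.
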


We state our main results in the sequel.

\begin{theorem}[Spectral Gap Property] \label{lacunaespectral}
	Let $ \Omega $ be a $T$-compatible modulus of continuity with respect to $ \omega $. 
	Assume also that $ \Omega $ is concave and vanishes orderly.
	Then, concerning any potential $ f \in \mathscr C_{\omega}(\mathbb T) $,
	when acting on complex-valued continuous functions on $ \mathbb T $ that admit $ \Omega $ as modulus of continuity,
	the spectrum of $ \mathscr L_f $, other than the maximal eigenvalue $ \chi $, is contained in a disc with radius strictly smaller than this eigenvalue. 
\end{theorem}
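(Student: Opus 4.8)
The plan is to follow the classical quasi-compactness route, the only genuine work being a Lasota--Yorke (Doeblin--Fortet) inequality adapted to the seminorm $ |\phi|_\Omega := \sup_{x\neq y}|\phi(x)-\phi(y)|/\Omega(d(x,y)) $ and to the non-uniform geometry near the indifferent fixed point $ 0 $. First I would reduce to a normalized operator. Using the eigendata $ (\chi, h, \nu) $ from \cite{GI22}, with $ h \in \mathscr C_\Omega(\mathbb T) $ positive (hence bounded away from $ 0 $ and $ \infty $ on the compact circle), set $ \hat{\mathscr L}\phi := \chi^{-1} h^{-1}\mathscr L_f(h\phi) $, so that $ \hat{\mathscr L}1 = 1 $ and $ \hat{\mathscr L}^*\mu = \mu $ for $ \mu = h\,\nu $. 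Since multiplication by $ h^{\pm 1} $ preserves $ \mathscr C_\Omega(\mathbb T) $ (concavity of $ \Omega $ makes it an algebra of functions, stable under inversion of elements bounded below), $ \mathscr L_f $ has the spectral gap property on $ \mathscr C_\Omega $ if and only if $ \hat{\mathscr L} $ does, with dominant eigenvalue $ 1 $. Writing $ \hat{\mathscr L}^n\phi(x_0) = \sum_{x_n:\,T^n x_n = x_0} w_n(x_n)\,\phi(x_n) $ along length-$n$ pre-orbits, with nonnegative weights $ w_n(x_n) = \chi^{-n} h(x_0)^{-1} e^{S_n f(x_n)} h(x_n) $, where $ S_n f(x_n) = \sum_{j=1}^n f(x_j) $, these weights sum to $ 1 $ and everything reduces to estimates on this stochastic family.

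The core is the inequality $ |\hat{\mathscr L}^{n_0}\phi|_\Omega \le \theta\,|\phi|_\Omega + K\,\|\phi\|_\infty $ for a suitable $ n_0 $ and $ \theta < 1 $. Fixing $ x_0, y_0 $ with $ d(x_0, y_0) < \varrho_1 $ and using the pre-orbit bijection from $T$-compatibility to pair the pre-images, I would decompose each summand as $ w_n(x_n)(\phi(x_n) - \phi(y_n)) + (w_n(x_n) - w_n(y_n))\phi(y_n) $. The second (weight) part is the standard bounded-distortion term: the estimate $ |S_n f(x_n) - S_n f(y_n)| \le |f|_\omega \sum_{j=1}^n \omega(d(x_j, y_j)) \le (|f|_\omega/C_1)\,\Omega(d(x_0, y_0)) $ coming from \ref{compatibilidade}, together with the controlled variation of $ h \in \mathscr C_\Omega $, bounds this contribution by $ K\,\Omega(d(x_0, y_0))\|\phi\|_\infty $, feeding the $ \|\phi\|_\infty $ term. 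The first (regularity) part contributes $ |\phi|_\Omega \sum w_n(x_n)\,\Omega(d(x_n, y_n)) $, so after dividing by $ \Omega(d(x_0, y_0)) $ the whole game is to show $ \sum w_n(x_n)\,\Omega(d(x_n, y_n))/\Omega(d(x_0, y_0)) \le \theta < 1 $ for $ n = n_0 $ large, uniformly in $ (x_0, y_0) $.

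Here both new hypotheses enter. I would split the pre-orbits into those that remain within a fixed neighborhood $ [0, \rho) $ of the indifferent fixed point for all $ 0 \le j \le n $ and the rest. For the escaping (\emph{good}) branches the dynamics is genuinely expanding away from $ 0 $, so $ d(x_n, y_n) = \mathsf t_n\,d(x_0, y_0) $ with $ \mathsf t_n \to 0 $ as $ n \to \infty $; since $ d(x_0, y_0) < 1/2 $, the \emph{orderly vanishing} of $ \Omega $ gives $ \Omega(d(x_n, y_n))/\Omega(d(x_0, y_0)) = \Omega(\mathsf t_n\,d(x_0, y_0))/\Omega(d(x_0, y_0)) \le \varepsilon $, uniformly. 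The near-fixed-point (\emph{bad}) branches form essentially a single pre-orbit string along which $ f \approx f(0) $ and the weights collapse to $ w_n \lesssim e^{-n(\log\chi - f(0))} $; the uniqueness of the equilibrium state in \cite{GI22} forces $ \delta_0 $ not to be an equilibrium state, whence $ f(0) < P(T, f) = \log\chi $ and this weight is geometrically small. As $ \Omega(d(x_n, y_n))/\Omega(d(x_0, y_0)) \le 1 $ trivially on the bad part, its total contribution is at most this small weight, and choosing $ \rho $ and then $ n_0 $ appropriately makes $ \theta = \varepsilon + o(1) < 1 $.

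With the Lasota--Yorke inequality in hand, quasi-compactness is automatic: the unit ball of $ (\mathscr C_\Omega(\mathbb T), \|\cdot\|_\Omega) $ is equicontinuous (all elements share the modulus $ \Omega $) and hence relatively compact in $ C(\mathbb T) $ by Arzel\`a--Ascoli, so the Ionescu-Tulcea--Marinescu / Hennion theorem yields that $ \hat{\mathscr L} $ is quasi-compact with essential spectral radius at most $ \theta^{1/n_0} < 1 $. It remains to show that $ 1 $ is a simple eigenvalue and the only eigenvalue of modulus $ 1 $; this follows from the strict positivity of the fixed function together with topological exactness of $ T $ (a degree-$\ge 2$ expanding map with full branches), which makes $ \hat{\mathscr L} $ primitive and excludes peripheral point spectrum other than $ 1 $. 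Translating back through the conjugacy gives the claimed disc of radius strictly smaller than $ \chi $ for the remainder of the spectrum of $ \mathscr L_f $. I expect the decisive difficulty to be precisely the uniform seminorm contraction of the third paragraph: reconciling the arbitrarily slow distance contraction near $ 0 $ with a fixed bound $ \theta < 1 $, which is exactly what forces the interplay between $T$-compatibility (to keep distortion summable), orderly vanishing (to contract the regularity on escaping branches at every scale $ \mathsf d < 1/2 $), and the strict inequality $ f(0) < \log\chi $ (to suppress the trapped branches).
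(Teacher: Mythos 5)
Your overall architecture (normalize via $h$, prove a Doeblin--Fortet--Lasota--Yorke inequality in $|\cdot|_\Omega$, then conclude quasi-compactness) matches the paper's, but the execution of the decisive step contains a genuine gap. Your seminorm contraction rests on the claim that for ``escaping'' branches one has $d(x_n,y_n)=\mathsf t_n\,d(x_0,y_0)$ with $\mathsf t_n\to 0$ \emph{uniformly}, the remaining branches being ``essentially a single pre-orbit string'' of exponentially small weight. This dichotomy is false in the non-uniformly expanding setting. Fix $n$ and let the pair $(x_0,y_0)$ approach the indifferent fixed point: every pre-orbit pair, including those that eventually take a non-indifferent inverse branch, spends its first many backward steps in a region where the local Lipschitz constant of $T$ is $1+o(1)$, so its accumulated contraction over $n$ steps is arbitrarily close to $1$; there is no uniform $\mathsf t_n$. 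Symmetrically, the branches of small weight are not only the fully trapped string: a branch that lingers near $0$ for $k<n$ backward steps and then escapes has weight decaying only in $k$ (not in $n$) and contraction decaying only in $n-k$. To salvage your scheme one must stratify pre-orbits by trapping time and balance $e^{-ck}$ against $\Omega(\lambda^{-(n-k)}\mathsf d)/\Omega(\mathsf d)$ summed over $k$ --- essentially a hyperbolic-times/inducing argument, i.e.\ precisely the machinery the paper sets out to avoid, and considerably more than the binary split you propose. (Your auxiliary claim $f(0)<\log\chi$ via uniqueness of the equilibrium state is correct but, as it turns out, never needed.)

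The paper circumvents all of this with one structural trick you did not use: since $\mathscr L_{\tilde f}\mathds 1=\mathds 1$, the weights form a probability vector, and \emph{concavity} of $\Omega$ (Jensen) pulls the sum inside the modulus,
\begin{equation*}
\sum_{(x_n,y_n)} e^{S_n\tilde f(x_n)}\,\Omega\big(d(x_n,y_n)\big)\;\le\;\Omega\Big(\sum_{(x_n,y_n)} e^{S_n\tilde f(x_n)}\,d(x_n,y_n)\Big)
\;\le\;\Omega\big(\mathrm{const}\cdot\theta(n)\,d(x_0,y_0)\big),
\end{equation*}
where $\theta(n)=\chi^{-n}\big\|\mathscr L^n_{f-\log(1+V)}\mathds 1\big\|_\infty$ aggregates the (non-uniform) expansion of \emph{all} branches into a single scalar. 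A soft monotonicity-plus-Dini argument (Lemma~\ref{teta}) shows $\theta(n)\to 0$ using only $T$-compatibility, and orderly vanishing then gives a DFLY inequality whose regularity coefficient $\tau(n)$ tends to $0$ --- strictly stronger than a fixed $\theta<1$, and obtained with no branch bookkeeping whatsoever. This strength also changes the endgame: instead of invoking Ionescu-Tulcea--Marinescu/Hennion and then excluding peripheral eigenvalues via primitivity or topological exactness (a step you assert but do not prove), the paper restricts $\mathscr L_{\tilde f}$ to the mean-zero subspace $\mathbb C_\Omega^\perp$, uses the already established uniform convergence $\mathscr L^n_{\tilde f}\phi\to\int\phi\,d\mu$ from \cite[Proposition 10]{GI22} together with Arzel\`a--Ascoli compactness of the unit $\|\cdot\|_\Omega$-ball in $C(\mathbb T)$, and bounds the full $\Omega$-norm of $\mathscr L_{\tilde f}^{N+n}$ on that ball by $2/3$, so the spectral radius on the complement is directly seen to be $<1$. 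You should either adopt the Jensen/concavity mechanism (your write-up never uses concavity of $\Omega$, although it is a standing hypothesis of the theorem) or accept that your route requires the full trapping-time stratification and a separate peripheral-spectrum argument.
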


It is very well established that the spectral gap property is not the big picture in the non-uniformly expanding context (see, for instance, \cite{You99, FL01, Sar02, Go04b, Hol05}).

A fundamental step in proving Theorem~\ref{lacunaespectral}, which is of its own importance (see  Proposition~\ref{desigualdadeDFLY}), is a 
Doeblin-Fortet-Lasota-Yorke-type inequality:
for the normalized potential $ \tilde f := f + \log h - \log h \circ T - \log \chi $, there exists a constant $ \tilde \Gamma > 0 $ such that
$$
\big\vert  \mathscr L_{\tilde f}^{n}  \phi \big\vert_\Omega \le \tilde \Gamma \, ( \tau(n) \, | \phi |_\Omega +  \vert\vert\phi\vert\vert_\infty ), \qquad 
\forall \, \phi \in  \mathscr C_{\Omega}(\mathbb T), \quad \forall \, n \ge 1.
$$
Here $ \tau(n) := \sup_{0 < \mathsf d < 1/2} \frac{\Omega\big(\theta(n) \, \mathsf d\big)}{\Omega(\mathsf d)} $, where 
$ \theta (n) :=  \frac{1}{\chi^n} \Big\vert \Big\vert  \mathscr L_{f - \log(1 + V)}^{n} \mathds 1 \Big\vert \Big\vert_\infty $ defines a sequence that tends to zero as $ n $ goes to infinity
whenever the hypothesis of $T$-compatibility is assumed.

Spectral gap implies exponential decay of correlations and the Central Limit Theorem.
In the following statements, all the hypotheses of Theorem~\ref{lacunaespectral} are implied.

\begin{theorem}[Exponential Decay of Correlations] \label{decaimentoexponencial}
	There exists $ \rho \in (0, 1) $ such that, given $ \phi, \psi \in \mathscr C_{\Omega}(\mathbb T) $, there is a positive constant $ K = K(\phi, \psi)  $ for which
	$$ \Big | \int \phi \,\, \psi \circ T^n \, d\mu - \int \phi \, d\mu \, \int \psi \, d\mu \Big | \le K \, \rho^n \qquad \forall \, n \ge 1. $$
\end{theorem}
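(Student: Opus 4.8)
The plan is to deduce the estimate from the spectral gap of Theorem~\ref{lacunaespectral} by the standard spectral projection argument, carried out for the normalized operator. First I would pass from $\mathscr L_f$ to $\mathscr L_{\tilde f}$, noting the conjugacy $\mathscr L_{\tilde f}\phi = \frac{1}{\chi\, h}\,\mathscr L_f(h\phi)$, so that $\mathscr L_{\tilde f}$ is conjugate to $\chi^{-1}\mathscr L_f$ through multiplication by the eigenfunction $h$. Since $h$ is continuous, positive, and bounded away from zero on the compact $\mathbb T$, multiplication by $h$ and by $1/h$ are bounded invertible operators on $\mathscr C_\Omega(\mathbb T)$; hence the two operators share the same spectrum, and $\mathscr L_{\tilde f}$ inherits the spectral gap with maximal eigenvalue $1$. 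By construction one has $\mathscr L_{\tilde f}\mathds 1 = \mathds 1$ and $\mathscr L_{\tilde f}^*\mu = \mu$, where $d\mu = h\,d\nu$, so that $1$ is a peripheral eigenvalue isolated from the remainder of the spectrum.

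Because the eigenvalue $1$ is simple, with eigenspace spanned by $\mathds 1$ thanks to the uniqueness of the positive eigenfunction established in \cite{GI22}, the associated spectral projection is the rank-one operator $P\phi = \big(\int \phi\,d\mu\big)\mathds 1$, and quasicompactness furnishes a decomposition $\mathscr L_{\tilde f} = P + N$ with $PN = NP = 0$ and spectral radius $r(N) < 1$. Iterating gives $\mathscr L_{\tilde f}^{\,n} = P + N^n$, and for any fixed $\rho \in (r(N), 1)$ there is $C > 0$ with $\| N^n\phi \|_\Omega \le C\,\rho^n\,\|\phi\|_\Omega$ for all $n \ge 1$ and all $\phi \in \mathscr C_\Omega(\mathbb T)$, where $\|\phi\|_\Omega := \|\phi\|_\infty + |\phi|_\Omega$ denotes the Banach norm on $\mathscr C_\Omega(\mathbb T)$.

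Next I would invoke the transfer-operator duality. Using the pull-out identity $\mathscr L_{\tilde f}\big((\psi\circ T)\,\phi\big) = \psi\,\mathscr L_{\tilde f}\phi$ together with $\mathscr L_{\tilde f}^*\mu = \mu$, an induction on $n$ yields
$$ \int (\psi\circ T^n)\,\phi\,d\mu = \int \psi\,\mathscr L_{\tilde f}^{\,n}\phi\,d\mu. $$
Substituting $\mathscr L_{\tilde f}^{\,n}\phi = \big(\int \phi\,d\mu\big)\mathds 1 + N^n\phi$ and using $\int \mathds 1\,d\mu = 1$, the product-of-means term cancels exactly, leaving
$$ \int \phi\,(\psi\circ T^n)\,d\mu - \int \phi\,d\mu \int \psi\,d\mu = \int \psi\,N^n\phi\,d\mu. $$

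Finally, I would estimate the right-hand side crudely by
$$ \Big| \int \psi\,N^n\phi\,d\mu \Big| \le \|\psi\|_\infty\,\|N^n\phi\|_\infty \le \|\psi\|_\infty\,\|N^n\phi\|_\Omega \le C\,\|\psi\|_\infty\,\|\phi\|_\Omega\,\rho^n, $$
which is the claimed bound with $K = C\,\|\psi\|_\infty\,\|\phi\|_\Omega$ and with $\rho$ independent of $\phi$ and $\psi$. I expect the only genuinely delicate point to be the identification of the peripheral spectral projection as the rank-one operator $\phi \mapsto \big(\int\phi\,d\mu\big)\mathds 1$: this rests on the maximal eigenvalue being algebraically simple and on no other spectral value having modulus $1$, which is precisely the combination of the spectral gap of Theorem~\ref{lacunaespectral} with the uniqueness of the Gibbs-equilibrium state from \cite{GI22}. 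Everything else is the routine functional-analytic consequence of quasicompactness together with the elementary duality between $\mathscr L_{\tilde f}$ and the invariant measure $\mu$.
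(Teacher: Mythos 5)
Your proposal is correct and is essentially the paper's own argument: the paper likewise deduces the theorem from the spectral gap of the normalized operator $\mathscr L_{\tilde f}$, using the duality $\int (\psi\circ T^n)\,\phi\,d\mu = \int \psi\, \mathscr L_{\tilde f}^{n}\phi\,d\mu$ together with a uniform geometric bound $\| \mathscr L_{\tilde f}^{n}\varphi \|_\Omega \le K_0\,\rho^n\,\|\varphi\|_\Omega$ on mean-zero functions, and then the same crude $L^1$--$L^\infty$ estimate. The only difference is packaging: where you invoke the abstract spectral decomposition $\mathscr L_{\tilde f}=P+N$ (and therefore must address algebraic simplicity of the eigenvalue $1$, which you correctly flag as the delicate point), the paper's proof of Theorem~\ref{lacunaespectral} already exhibits the invariant splitting $\mathbb C\,\mathds 1 \oplus \mathbb C_\Omega^\perp$ with spectral radius strictly less than $1$ on the mean-zero subspace $\mathbb C_\Omega^\perp$, so that $N^n\phi=\mathscr L_{\tilde f}^{n}\big(\phi-\int\phi\,d\mu\big)$ satisfies your estimate directly and the simplicity question never needs to be raised.
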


As usual, we denote $ S_n \phi $ the $n$th Birkhoff sum $ \phi + \phi \circ T + \ldots + \phi \circ T^{n-1} $.

\begin{theorem}[Central Limit Theorem] \label{CLT}
	For any function $ \phi \in \mathscr C_{\Omega}(\mathbb T) $ which is not cohomologous to a constant, there is $ \gamma = \gamma(\phi) > 0 $ such that
	$$ \lim_{n \to \infty} \mu \Big\{  x \in \mathbb T : \frac{1}{\sqrt{n}} S_n\big(\phi - \int \phi \, d\mu \big)(x) < b \Big\} = \frac{1}{\gamma\sqrt{2\pi}} \int_{- \infty}^b e^{-t^2/2\gamma^2} \, dt. $$
\end{theorem}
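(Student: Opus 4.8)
The plan is to apply the Nagaev--Guivarc'h spectral (perturbative) method, exploiting the spectral gap of the normalized operator $\mathscr L_{\tilde f}$ furnished by Theorem~\ref{lacunaespectral}. Normalizing so that $\chi = 1$, recall that $\mathscr L_{\tilde f}\mathds 1 = \mathds 1$ and that $\mu = h\,\nu$ is the invariant probability with $\int \mathscr L_{\tilde f} u \, d\mu = \int u \, d\mu$. Without loss of generality I set $\bar\phi := \phi - \int \phi\, d\mu$, so that $\int \bar\phi\, d\mu = 0$ and the variable under study is $\tfrac{1}{\sqrt n} S_n \bar\phi$. For $t \in \mathbb R$ I would introduce the family of twisted operators $\mathscr L_{t} u := \mathscr L_{\tilde f}\big(e^{it\bar\phi}\, u\big)$ acting on $\mathscr C_\Omega(\mathbb T)$; since this space is a Banach algebra and $e^{it\bar\phi} \in \mathscr C_\Omega(\mathbb T)$ with $|e^{it\bar\phi}|_\Omega \le |t|\,|\bar\phi|_\Omega$, these are bounded operators depending analytically on $t$. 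The point of departure is the identity $\mathscr L_{t}^{\,n} u = \mathscr L_{\tilde f}^{\,n}\big(e^{it S_n \bar\phi}\, u\big)$, which upon taking $u = \mathds 1$ and integrating against $\mu$ yields the characteristic function
\[
\int_{\mathbb T} e^{it S_n \bar\phi}\, d\mu = \int_{\mathbb T} \mathscr L_{t}^{\,n} \mathds 1 \, d\mu .
\]

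Next I would transfer the spectral gap from $\mathscr L_{0} = \mathscr L_{\tilde f}$ to $\mathscr L_{t}$ for small $|t|$. Using the Doeblin--Fortet--Lasota--Yorke inequality of Proposition~\ref{desigualdadeDFLY} together with the analytic dependence $t \mapsto \mathscr L_{t}$, standard perturbation theory (Kato, or the Keller--Liverani stability of the spectrum) guarantees that for $|t|$ small enough $\mathscr L_{t}$ has a simple leading eigenvalue $\lambda(t)$, analytic with $\lambda(0) = 1$, an associated spectral projection $P_t$, and a uniform gap estimate giving a decomposition $\mathscr L_{t}^{\,n} = \lambda(t)^n P_t + N_t^{\,n}$ with $\|N_t^{\,n}\|_\Omega \le C\kappa^n$ for some $\kappa \in (0,1)$ independent of $t$. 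Substituting $t/\sqrt n$ for $t$ I then obtain
\[
\int_{\mathbb T} e^{i(t/\sqrt n)\, S_n \bar\phi}\, d\mu = \lambda\!\big(t/\sqrt n\big)^{n} \int_{\mathbb T} P_{t/\sqrt n} \mathds 1 \, d\mu + O(\kappa^n),
\]
where the integral prefactor converges to $\int_{\mathbb T} P_0 \mathds 1 \, d\mu = 1$ as $n \to \infty$.

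It remains to expand $\lambda$. Differentiating the eigenvalue relation and using $\int \bar\phi\, d\mu = 0$ gives $\lambda'(0) = 0$, while the second-order term produces $\lambda(t) = 1 - \tfrac{\gamma^2}{2}\, t^2 + o(t^2)$ with the Green--Kubo variance
\[
\gamma^2 = \int \bar\phi^{\,2}\, d\mu + 2\sum_{k \ge 1} \int \bar\phi \,\, \big(\bar\phi \circ T^k\big)\, d\mu,
\]
the series converging by the exponential decay of correlations of Theorem~\ref{decaimentoexponencial}. Hence $\lambda(t/\sqrt n)^n \to e^{-\gamma^2 t^2/2}$, so the characteristic function converges pointwise to $e^{-\gamma^2 t^2/2}$, and Lévy's continuity theorem yields convergence in distribution of $\tfrac{1}{\sqrt n} S_n \bar\phi$ to a centered Gaussian of variance $\gamma^2$; continuity of the limiting law upgrades this to the pointwise convergence of distribution functions asserted in the statement.

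The main obstacle is the non-degeneracy $\gamma > 0$, which is precisely where the hypothesis that $\phi$ is not cohomologous to a constant enters. I would show that $\gamma^2 = 0$ forces $\bar\phi$ to be an $L^2(\mu)$-coboundary $\bar\phi = u - u\circ T$; bootstrapping the regularity of $u$ back into $\mathscr C_\Omega(\mathbb T)$, using the contraction along pre-orbits built into the $T$-compatibility of $\Omega$, then gives $\phi = \mathrm{const} + u - u\circ T$, contradicting the hypothesis. The converse is immediate, since for such $\phi$ the sums $S_n \bar\phi = u - u\circ T^n$ stay bounded and $\tfrac{1}{\sqrt n} S_n \bar\phi \to 0$. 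Apart from this dichotomy, the one delicate verification is the uniform persistence of the spectral gap under the twist, for which the inequality of Proposition~\ref{desigualdadeDFLY} is the essential input.
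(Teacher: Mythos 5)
Your proposal is correct in substance, but it is executed quite differently from the paper. The paper does not carry out the perturbative argument at all: it invokes, as a black box, an abstract theorem (attributed to Doeblin--Fortet, Nagaev, Rousseau-Egele, Guivarc'h--Hardy, with Sarig's lectures cited for the proof) stating that the CLT holds whenever the transfer operator has a spectral gap on a Banach algebra $\mathcal B$ containing the constants with $\| \varphi \psi \|_{\mathcal B} \le \| \varphi \|_{\mathcal B} \| \psi \|_{\mathcal B}$ and $\| \varphi \|_{L^1(\mu)} \le \| \varphi \|_{\mathcal B}$, and the system is strongly mixing. The paper's actual work therefore reduces to checking hypotheses: the spectral gap is Theorem~\ref{lacunaespectral}, the algebra properties of $(\mathscr C_\Omega(\mathbb T), \|\cdot\|_\Omega)$ are implicit, and strong mixing is established in a separate short proposition (via the uniform convergence $\mathscr L_{\tilde f}^n \psi \to \int \psi \, d\mu$ and density of $C(\mathbb T)$ in $L^2(\mu)$). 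What you do instead is re-prove that black box: the twisted operators $\mathscr L_t$, analyticity in $t$ via the Banach algebra structure, Kato perturbation of the simple leading eigenvalue, the expansion $\lambda(t) = 1 - \tfrac{\gamma^2}{2}t^2 + o(t^2)$ with the Green--Kubo variance, L\'evy continuity, and the coboundary dichotomy for $\gamma > 0$. Your route buys self-containedness and makes visible where each hypothesis enters (in particular, strong mixing is not needed as a separate input, since it is subsumed by the gap); the paper's route buys brevity at the cost of a citation. Two minor remarks: with norm-analyticity of $t \mapsto \mathscr L_t$, plain Kato perturbation theory suffices and Keller--Liverani stability is not needed; and the regularity bootstrap in your non-degeneracy step (upgrading the $L^2(\mu)$ solution of the coboundary equation to the class where ``cohomologous'' is meant) is the one place where you would still have to supply a genuine argument, e.g.\ the standard Liv\v{s}ic-type telescoping along pre-orbits, which you correctly flag but do not carry out.
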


The rest of this note is organized as follows: in the next section, we present the proofs of the main results, and in the last section we provide examples of applications.

\section{Proofs of the Main Results}

We consider a constant $ \varrho_V \in (0, 1/2) $ such that, if $ x, y \in [0,1] $ with $ | x - y | < \varrho_V $, then $  | x - y | N_V + |V(x) - V(y)| < 1/2 $,
where $ N_V := 1 + V(1) $  is the number of inverse branches of the map $ T $.  We also suppose that 
$$ \varrho_V < \frac{1}{2} \, \min_{0 \le i < N_V} d(a_{i+1}, a_i), $$
where $\{a_i\}_{i=0}^{N_V-1}$  is the (positively oriented) set of pre-images of $ a_0 := 0 =: a_{N_V} $. 
Finally we impose that the constant $ \varrho_V $ is small enough in order to ensure that 
$ \varrho_V + \varrho_V V(\varrho_V) + V(1) - (1-\varrho_V)V(1 - \varrho_V) < 1/2 $.
Denote by $ \mathsf{A_0} $ the set of pair of points in $ \mathbb T $ such that the origin belongs to the smallest open arc connecting them. 
Define 
$$ \lambda_V(x,y) := \mathds 1_{\mathsf{A_0}}(x,y) \, \min\{ V(x), V(y)\} + \mathds 1_{\mathsf{A_0}^\complement}(x,y) \, \max\{V(x), V(y)\}, $$
where  $ \mathds 1_{\mathsf S} $ represents the indicator function of a subset $ \mathsf S $. 
This terminology is useful to describe in the following lemma the non-uniformly expanding property  of the system.

\begin{lemma}
	If $ d(x, y) < \varrho_V $, then $ d(T(x), T(y)) \ge d(x, y) \, \big( 1 + \lambda_V(x,y) \big) $.
\end{lemma}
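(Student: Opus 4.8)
The plan is to lift the dynamics to $\tilde T(x) := x(1+V(x))$ on $[0,1]$, so that $T = \tilde T \bmod 1$, and to treat separately the two geometric configurations encoded by the indicator functions in the definition of $\lambda_V$: whether or not the origin lies in the smallest arc joining $x$ and $y$. In each case the argument has the same shape. First I invoke the defining properties of $\varrho_V$ to guarantee that the relevant difference of images, reduced mod $1$, lands in $[0,1/2)$, so that $d(T(x),T(y))$ equals this difference with no winding around $\mathbb T$. Then the monotonicity of $V$ alone delivers the lower bound.

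Suppose first that $(x,y) \in \mathsf{A_0}^\complement$; choosing representatives $0 \le x < y \le 1$ along the origin-free arc, I compute
$$ \tilde T(y) - \tilde T(x) = (y-x)\big(1 + V(y)\big) + x\big(V(y) - V(x)\big). $$
Since $V$ is nondecreasing the second summand is nonnegative, while the defining bound $|x-y|N_V + |V(x)-V(y)| < 1/2$ keeps the whole expression in $[0,1/2)$; hence $d(T(x),T(y)) = \tilde T(y) - \tilde T(x) \ge (y-x)\big(1+V(y)\big)$, which is exactly $d(x,y)\big(1 + \lambda_V(x,y)\big)$ because here $\lambda_V(x,y) = \max\{V(x),V(y)\} = V(y)$.

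The delicate case is $(x,y) \in \mathsf{A_0}$. Now $\varrho_V < \tfrac12 \min_i d(a_{i+1},a_i)$ forces $y$ into the first branch $[0,a_1)$ and $x$ into the last branch $(a_{N_V-1},1)$, so that $T(y) = y(1+V(y))$ and, using crucially that $\tilde T(1) = 1 + V(1) = N_V$ is an integer, $T(x) = \tilde T(x) - (N_V - 1)$. With $\delta := d(x,y) = (1-x)+y$ the smallest image arc again passes through the origin, whence
$$ d(T(x),T(y)) = 1 - T(x) + T(y) = \delta + \big(V(1) - xV(x) + yV(y)\big); $$
monotonicity of $t \mapsto tV(t)$ together with the final requirement on $\varrho_V$ keeps this below $1/2$, ruling out winding. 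It then remains to verify the scalar inequality $V(1) - xV(x) + yV(y) \ge \delta \min\{V(x),V(y)\} = \big((1-x)+y\big)V(y)$, which after cancelling $yV(y)$ reduces to $V(1) - xV(x) \ge (1-x)V(y)$; splitting $V(1)-xV(x) = \big(V(1)-V(x)\big) + (1-x)V(x)$ and using $V(x)\ge V(y)$ finishes it. I expect the main obstacle to be exactly this bookkeeping near the point $1$: pinning down the correct inverse branch, exploiting the integrality of $V(1)$, and confirming through the three defining inequalities of $\varrho_V$ that no image arc wraps around the circle. Once no-wrapping is secured, the monotonicity of $V$ does all the remaining work.
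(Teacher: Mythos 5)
Your proof is correct and follows essentially the same route as the paper's: the same case split on whether the origin lies in the smallest arc joining $x$ and $y$, the same use of the three defining properties of $\varrho_V$ to rule out wrapping, and the same monotonicity argument (your inequality $V(1)-xV(x)\ge(1-x)V(y)$ is exactly the paper's $V(1)\ge xV(x)+(1-x)V(y)$). The only cosmetic difference is that you identify the inverse branches explicitly to pin down the mod-$1$ reduction, where the paper simply bounds $\big|\tilde T(x)-\tilde T(y)-(1+V(1))\big|<1/2$ directly.
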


\begin{proof} Without loss of generality, we suppose $ 0 \le y \le x < 1$ in any situation.
	
	\smallskip
	
	\noindent \emph{Either $ (x,y) \notin \mathsf{A_0} $.} In this case, $ | x - y | = d(x, y)  < \varrho_V $ so that
	$$ \big |x  \big( 1+V(x) \big) - y  \big( 1+V(y)  \big) \big | \le |x-y| N_V + |V(x) - V(y)| < \frac{1}{2}. $$
	Therefore, as $ V $ is increasing, we see that

	\begin{eqnarray*} 
		d\big(T(x), T(y)\big) & =  x \big(1+V(x)\big) - y \big(1+V(y)\big) \\ & \ge (x - y) \big (1 + V(x) \big) = d(x, y) \big(1 + \max\{V(x), V(y)\} \big).
	\end{eqnarray*}
	
	\smallskip
	
	\noindent \emph{Or $ (x,y) \in \mathsf{A_0} $.}  Note thus that $ |x - y - 1| =  d(x, y) $ and the last condition required for the definition of $ \varrho_V $ guarantees
	$$ \big | x\big(1+V(x)\big)  - y\big(1+V(y)\big) -  \big(1 + V(1) \big) \big| \le | x - y - 1 | +  y V(y) + V(1) - xV(x) <  \frac{1}{2}.  $$
	Hence, we have that
	\begin{eqnarray*} 
		d\big(T(x), T(y)\big) & = y\big(1+V(y)\big) -  x\big(1+V(x)\big) + \big(1 + V(1) \big) \\
		& = d(x, y) + y V(y) - x V(x) +  V(1).
	\end{eqnarray*}
	Clearly $ V(1) \ge x V(x) + (1-x)V(y) $, which yields
	$$ y V(y) - x V(x) + V(1) \ge (y - x + 1)V(y) = d(x, y) V(y). $$ 
	We have shown that, in this case,
	$$  d\big(T(x), T(y)\big)  \ge d(x, y)  \big (1 + V(y) \big) = d(x, y) \big(1 + \min \{V(x), V(y)\} \big). $$
\end{proof}

The next result is an immediate consequence of the fact we dealing with order-preserving maps. We omit its simple proof. 

\begin{lemma}
	Suppose that $ d(x,y) < \varrho_V $. If $ \lambda_V(x,y) =  V(x) $, then
	$$ \prod_{i = 0}^{n-1} \Big( 1 + \lambda_V\big(T^i(x_n), T^i(y_n)\big) \Big) = \prod_{i = 0}^{n-1} \Big( 1 +V\big(T^i(x_n)\big) \Big), $$
	whenever $ x_n \in T^{-n}(x) $ and $ y_n \in T^{-n}(y) $ are a pair of pre-images such that  $ d(T^i(x_n), T^i(y_n)) < \varrho_V $ for $ i = 0, \ldots, n-1 $.
\end{lemma}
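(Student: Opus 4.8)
The plan is to reduce the statement to a one-step propagation property along the inverse branches and then iterate it. Writing $ u_i := T^i(x_n) $ and $ v_i := T^i(y_n) $, so that $ T(u_i) = u_{i+1} $, $ T(v_i) = v_{i+1} $, $ u_n = x $ and $ v_n = y $, it suffices to prove that $ \lambda_V(u_i, v_i) = V(u_i) $ holds for every $ i = 0, \ldots, n $; substituting this identity term by term into the left-hand product immediately yields the claimed equality. The hypothesis gives the identity at level $ i = n $ (where $ d(x,y) < \varrho_V $), and all the intermediate pairs satisfy $ d(u_i, v_i) < \varrho_V $ by assumption, so I would argue by downward induction: assuming $ \lambda_V(u_{i+1}, v_{i+1}) = V(u_{i+1}) $, deduce $ \lambda_V(u_i, v_i) = V(u_i) $. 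The core is therefore the one-step claim: if $ T(u') = u $, $ T(v') = v $ with $ d(u,v) < \varrho_V $ and $ d(u', v') < \varrho_V $, then $ \lambda_V(u,v) = V(u) $ implies $ \lambda_V(u', v') = V(u') $.

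To treat both alternatives in the definition of $ \lambda_V $ uniformly, I would first record a geometric reading of the condition $ \lambda_V(p,q) = V(w) $. Fixing the positive (increasing) orientation of $ \mathbb T $, every pair of points at distance less than $ \varrho_V < 1/2 $ is joined by a unique short arc, which has a well-defined terminal endpoint. A short check of the two cases shows that $ \lambda_V(p,q) $ always equals $ V $ evaluated at that terminal endpoint: when the origin does not lie between $ p $ and $ q $, the short arc carries no wrap-around, its terminal endpoint has the larger coordinate and hence, since $ V $ is increasing, the larger $ V $-value, matching the $ \max $ in $ \mathsf{A_0}^\complement $; when the origin does lie between them, the terminal endpoint is the point just past the origin, which has the smaller coordinate and hence the smaller $ V $-value, matching the $ \min $ in $ \mathsf{A_0} $. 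Thus $ \lambda_V(u,v) = V(u) $ says exactly that $ u $ is the terminal endpoint of the short arc joining $ u $ and $ v $.

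With this reformulation, the one-step claim becomes a statement about orientation being preserved: since $ T $ is order-preserving, each inverse branch is orientation-preserving, so it carries the short arc joining $ u $ and $ v $ to an arc joining $ u' $ and $ v' $ while preserving which endpoint is terminal, and $ u' $ remains terminal. The point requiring care — and the main obstacle — is to confirm that this lifted arc really is the \emph{short} arc between $ u' $ and $ v' $, so that the terminal-endpoint characterization applies to it. Here the specific construction of $ \varrho_V $ is essential: because $ \varrho_V < \tfrac12 \min_i d(a_{i+1}, a_i) $, the short arc between $ u' $ and $ v' $ meets at most one pre-image $ a_j $ of the origin, and the distance estimates built into the definition of $ \varrho_V $ (the very inequalities used in the first lemma) guarantee that $ T $ sends this short arc onto the short arc between $ u $ and $ v $ rather than onto its complement. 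The subtlety is that, when $ (u,v) \in \mathsf{A_0} $, the lifted short arc typically straddles a pre-image $ a_j \neq 0 $ of the origin rather than the origin itself, so $ (u', v') \notin \mathsf{A_0} $ and the active branch of $ \lambda_V $ switches from $ \min $ to $ \max $; nevertheless the terminal-endpoint reading is insensitive to this switch, so the conclusion $ \lambda_V(u', v') = V(u') $ persists. Iterating the one-step claim down the orbit then completes the proof.
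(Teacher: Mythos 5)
Your argument is the one the paper intends --- it omits the proof precisely because it regards the lemma as an immediate consequence of order preservation --- and the two delicate points you isolate (that the lifted arc is again the \emph{short} arc, thanks to the inequalities built into $\varrho_V$, and that the $\min$/$\max$ switch at a pre-image $a_j\neq 0$ of the origin is invisible to the terminal-endpoint reading) are handled correctly. The gap is in your pivotal reformulation, ``$\lambda_V(p,q)$ always equals $V$ evaluated at the terminal endpoint'': your two-case check misses the boundary case in which the origin is an \emph{endpoint} of the short arc. Since $\mathsf{A_0}$ is defined through the \emph{open} arc, a pair of the form $(p,0)$ with $p\in(1-\varrho_V,1)$ lies in $\mathsf{A_0}^\complement$, so $\lambda_V(p,0)=\max\{V(p),V(0)\}=V(p)$, which is the value at the \emph{initial} endpoint (the terminal one is the origin). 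Consequently the base case of your downward induction is false exactly when $y=0$ and $x\in(1-\varrho_V,1)$: there the hypothesis $\lambda_V(x,y)=V(x)$ holds even though $x$ is not terminal.

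This is not a defect you can repair within your framing, because in that configuration the asserted equality itself fails. Take $T(z)=z(1+z)\bmod 1$ (so $V(z)=z$, $a_1=(\sqrt{5}-1)/2$), $\varrho_V=1/10$, $x=0.95$, $y=0$, $n=1$: the pre-image pair $x_1=(\sqrt{4.8}-1)/2\approx 0.595$, $y_1=a_1\approx 0.618$ satisfies $d(x_1,y_1)<\varrho_V$, yet $\lambda_V(x_1,y_1)=\max\{V(x_1),V(a_1)\}=V(a_1)\neq V(x_1)$, so $1+\lambda_V(x_1,y_1)\neq 1+V(x_1)$. What your orientation argument does prove, once the boundary case is added, is this: if $y\neq 0$, then also $x\neq 0$ (otherwise $\lambda_V(x,y)=V(y)\neq V(x)$, as $V$ is increasing), and since $T(0)=0$ no point $T^i(x_n)$ or $T^i(y_n)$ can be the origin, so your two cases are exhaustive and the equality holds; if $y=0$, the origin is an endpoint of the top arc, hence never interior to any lifted arc, so every pair lies in $\mathsf{A_0}^\complement$ and $\lambda_V\big(T^i(x_n),T^i(y_n)\big)=\max\{V(T^i(x_n)),V(T^i(y_n))\}\ge V\big(T^i(x_n)\big)$. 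Either way one obtains $\prod_i\big(1+\lambda_V(T^i(x_n),T^i(y_n))\big)\ge\prod_i\big(1+V(T^i(x_n))\big)$, and this one-sided bound is all that the proof of Proposition~\ref{desigualdadeDFLY} uses, since it only needs an upper bound on $d(x_n,y_n)$. So you should either add the hypothesis $y\neq 0$ and keep your equality, or weaken the conclusion to the inequality; as stated, both your base case and the lemma break at this boundary configuration.
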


We may now present a version of a Doeblin-Fortet-Lasota-Yorke inequality. To that end, we make use of $\kappa_f := C_1^{-1}\vert f\vert_\omega$, where 
$ C_1$ is one of the constants that characterize the $T$-compatibility of $\Omega$ with respect to $\omega$ (see \ref{compatibilidade}). The other key constant, $ \varrho_1 $, can and
will be supposed smaller than $ \varrho_V $.

\begin{proposition}\label{desigualdadeDFLY}
	Let $ \Omega $ be a $T$-compatible modulus of continuity with respect to $ \omega $. Suppose that $ \Omega $ is concave.
	Given $ n \ge 1 $,  $\phi \in \mathscr C_\Omega(\mathbb T) $, and $x, y\in \mathbb T$ with $d(x,y)<\varrho_1$,  for $ \Gamma :=  \max \{2 \kappa_f e^{2 \kappa_f \; \Omega(1/2)}, \lceil \max h \, / \min h \rceil  \} $ 
	the following estimate holds
	$$
	\Big\vert  \mathscr L_{\tilde f}^{n}  \phi (x)   -  \mathscr L_{\tilde f}^{n} \phi (y)\Big\vert \le \Gamma \Big( | \phi |_\Omega \;  \Omega\big(\theta(n) \, d(x,y)\big) +  \vert\vert\phi\vert\vert_\infty  \; \Omega(d(x,y)) \Big),
	$$
	where $ {\displaystyle \theta(n) := \frac{1}{\chi^n} \Big\vert \Big\vert  \mathscr L_{f - \log(1 + V)}^{n} \mathds 1 \Big\vert \Big\vert_\infty} $. In particular,
	there exists a positive multiple $ \tilde \Gamma = \tilde \Gamma (\varrho_1) $ of the constant $ \Gamma $ such that
	$$
	\big\vert  \mathscr L_{\tilde f}^{n}  \phi \big\vert_\Omega \le \tilde \Gamma \big( \tau(n) \, | \phi |_\Omega +  \vert\vert\phi\vert\vert_\infty \big),
	$$
	with ${\displaystyle \tau(n) := \sup_{0 < \mathsf d < 1/2} \frac{\Omega\big(\theta(n) \, \mathsf d\big)}{\Omega(\mathsf d)}} $.
\end{proposition}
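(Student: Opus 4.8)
The plan is to expand the iterated operator as $\mathscr L_{\tilde f}^n\phi(x) = \sum_{x_n \in T^{-n}(x)} e^{S_n\tilde f(x_n)}\phi(x_n)$ and to exploit that the normalization $\tilde f = f + \log h - \log h\circ T - \log\chi$ forces $\mathscr L_{\tilde f}\mathds 1 = \mathds 1$, so that the weights $p(x_n) := e^{S_n\tilde f(x_n)}$ are nonnegative and sum to $1$ over $T^{-n}(x)$. Since $d(x,y) < \varrho_1 < \varrho_V$, the $T$-compatibility hypothesis furnishes a bijection $x_n \leftrightarrow y_n$ between the $n$th pre-images of $x$ and of $y$ with $d(T^i x_n, T^i y_n) \le d(x,y)$ for all $i$. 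Using this pairing I would write the difference as $\sum_{x_n}\big(e^{S_n\tilde f(x_n)}[\phi(x_n)-\phi(y_n)] + [e^{S_n\tilde f(x_n)} - e^{S_n\tilde f(y_n)}]\phi(y_n)\big)$ and estimate the two sums separately: the first carries the $|\phi|_\Omega$ contribution, the second the $\|\phi\|_\infty$ contribution.

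For the first sum, I would bound $|\phi(x_n)-\phi(y_n)| \le |\phi|_\Omega\,\Omega(d(x_n,y_n))$ and apply Jensen's inequality for the concave $\Omega$ against the probability weights $p(x_n)$, obtaining $\sum_{x_n} p(x_n)\,\Omega(d(x_n,y_n)) \le \Omega\big(\sum_{x_n} p(x_n)\,d(x_n,y_n)\big)$. The expansion lemma, iterated along the pre-orbit and combined with the subsequent product lemma, yields $d(x_n,y_n) \le d(x,y)\big/\prod_{i=0}^{n-1}(1+V(T^i x_n))$. The decisive computation is then the identity $\sum_{x_n} p(x_n)\big/\prod_{i=0}^{n-1}(1+V(T^i x_n)) = \mathscr L_{f-\log(1+V)}^n h(x)\big/(\chi^n h(x))$, which follows from $e^{S_n\tilde f(x_n)} = \chi^{-n} h(x_n)\, h(x)^{-1}\, e^{S_n f(x_n)}$; bounding $h$ between $\min h$ and $\max h$ shows this is at most $(\max h/\min h)\,\theta(n)$. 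Since $\Omega$ is concave with $\Omega(0)=0$, hence subadditive, $\Omega\big((\max h/\min h)\,\theta(n)\,d(x,y)\big) \le \lceil \max h/\min h\rceil\,\Omega(\theta(n)\,d(x,y))$, giving the first term the bound $\Gamma\,|\phi|_\Omega\,\Omega(\theta(n)\,d(x,y))$.

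For the second sum I would first establish the distortion estimate $|S_n\tilde f(x_n) - S_n\tilde f(y_n)| \le 2\kappa_f\,\Omega(d(x,y))$: the contribution of $f$ is controlled by $|S_n f(x_n)-S_n f(y_n)| \le |f|_\omega\sum_{j=1}^n\omega(d(x_j,y_j)) \le \kappa_f\big(\Omega(d(x,y))-\Omega(d(x_n,y_n))\big)$ through the compatibility inequality~\ref{compatibilidade}, while the telescoped $\log h$ differences are absorbed using the H\"older-type regularity of the eigenfunction $h$ recorded in \cite{GI22}. With $|a-b| \le 2\kappa_f\,\Omega(1/2)$ thus available, the inequality $|e^a-e^b| \le e^b\, e^{|a-b|}\,|a-b|$ together with the normalization $\sum_{y_n} e^{S_n\tilde f(y_n)} = 1$ yields $\sum_{x_n}\big|e^{S_n\tilde f(x_n)}-e^{S_n\tilde f(y_n)}\big| \le 2\kappa_f\, e^{2\kappa_f\Omega(1/2)}\,\Omega(d(x,y))$, so the second term is at most $\Gamma\,\|\phi\|_\infty\,\Omega(d(x,y))$. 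Adding the two bounds produces the stated pointwise estimate.

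Finally, to reach the seminorm bound I would divide by $\Omega(d(x,y))$. When $d(x,y)<\varrho_1$ the ratio $\Omega(\theta(n)\,d(x,y))/\Omega(d(x,y))$ is at most $\tau(n)$ by definition, as $d(x,y)<1/2$; when $d(x,y)\ge\varrho_1$ I would instead use the crude bound $\big|\mathscr L_{\tilde f}^n\phi(x)-\mathscr L_{\tilde f}^n\phi(y)\big| \le 2\|\phi\|_\infty$ together with $\Omega(d(x,y))\ge\Omega(\varrho_1)$, which is absorbed into a $\varrho_1$-dependent multiple $\tilde\Gamma$ of $\Gamma$. I expect the main obstacle to be the distortion estimate for the normalized potential: unlike the $f$-part, which is handed to us by~\ref{compatibilidade}, controlling the telescoped $\log h$ differences with the sharp constant requires the quantitative modulus bound on $h$ from \cite{GI22}, and matching it to the factor $2\kappa_f$ is the delicate bookkeeping of the argument.
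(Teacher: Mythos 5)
Your proposal is correct and matches the paper's own proof in all essential respects: the same pairing of pre-images furnished by $T$-compatibility, the same splitting of the difference into an $|\phi|_\Omega$ term and a $\|\phi\|_\infty$ term, Jensen's inequality for the concave $\Omega$ against the weights $e^{S_n\tilde f(x_n)}$ combined with the expansion and product lemmas to produce $\theta(n)$, the exact cancellation giving the distortion bound $|S_n\tilde f(x_n)-S_n\tilde f(y_n)|\le 2\kappa_f\,\Omega(d(x,y))$, and subadditivity of $\Omega$ to absorb $\max h/\min h$ into $\lceil \max h/\min h\rceil$. The only deviations are cosmetic: you weight the second sum by $e^{S_n\tilde f(y_n)}$ rather than factoring out $e^{S_n\tilde f(x_n)}$, you leave implicit the relabeling that allows one to assume $\lambda_V(x,y)=V(x)$ before invoking the product lemma, and you spell out the $d(x,y)\ge\varrho_1$ case of the seminorm bound, which the paper itself leaves implicit.
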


\begin{proof}
	Let $ \{x_k\}_{k \ge 1} $ be a pre-orbit of $x$. 
	By invoking the $T$-compatibility of $\Omega$ with respect to $\omega$, consider the unique pre-orbit $ \{y_k\}_{k \ge 1} $ of $y$ with respect to which the conditions in \ref{compatibilidade} are met.
	Note first that
	\begin{equation*}
		\big \vert S_n f(x_n)-S_n f(y_n) \big \vert \le \kappa_f \Big(\Omega\big(d(x,y)\big)-\Omega\big(d(x_n,y_n)\big)\Big).
	\end{equation*}
	Recall that  $ \tilde f= f + \log h - \log h \circ T - \log \chi $, where the function $ h $ belongs to 
	$$
	\left\{\psi\in  C^{0}(\mathbb T) : \, \psi\ge 0, \, \int \psi \,d\nu=1, \,\, \psi(x)\le \psi(y)\, e^{\kappa_f \, \Omega(d(x,y))} \textrm{ if } d(x,y)<\varrho_1\right\}.
	$$
	(For details, see the proof of Proposition~8 in \cite{GI22}.) In particular,
	\begin{equation*}
		\Big \vert \big( \log h - \log h \circ T^n \big)(x_n) - \big( \log h - \log h \circ T^n \big)(y_n) \Big \vert \le \kappa_f \Big(\Omega\big(d(x_n,y_n\big) + \Omega\big(d(x,y)\big)\Big),
	\end{equation*}
	so that one clearly obtains
	\begin{equation}\label{variation sum tilde f}
		\big \vert S_n \tilde f(x_n)-S_n \tilde f(y_n) \big \vert \le 2 \kappa_f \Omega\big(d(x,y)\big).
	\end{equation}
	
	From now on, without loss of generality, we assume the pair of points $ x $ and $ y $ is such that $ \lambda_V(x,y) = V(x) $.
	
	Note that 
	$$ 
	\Big\vert\mathscr L_{\tilde f}^n \phi (x)-\mathscr L_{\tilde f}^n \phi (y)\Big\vert \le  \sum_{(x_n, y_n)} \Big\vert e^{S_n \tilde f(x_n)}\phi(x_n)- e^{S_n \tilde f(y_n)}\phi(y_n) \Big\vert,
	$$
	where the sum is taken over pairs of pre-images $ (x_n, y_n) $ associated by the $T$-compatibility of $\Omega$ with respect to $\omega$.
	We have the following estimates
	\begin{eqnarray*}
		\Big\vert e^{S_n \tilde f(x_n)}\phi(x_n) - & e^{S_n \tilde f(y_n)}\phi(y_n) \Big\vert \le \\
		& \le e^{S_n \tilde f(x_n)} \, |\phi|_\Omega \, \Omega\big(d(x_n,y_n)\big) + \vert\vert\phi\vert\vert_\infty \, \Big\vert e^{S_n \tilde f(x_n)}-e^{S_n \tilde f(y_n)}\Big\vert \\
		& \le e^{S_n \tilde f(x_n)} \Big(  \, |\phi|_\Omega \, \Omega\big(d(x_n,y_n)\big) + \vert\vert\phi\vert\vert_\infty  \, \Big\vert e^{S_n \tilde f(y_n)- S_n \tilde f(x_n)} - 1\Big\vert \Big).
	\end{eqnarray*}
	From~\ref{variation sum tilde f}, we thus see that
	$$
	\Big\vert e^{S_n \tilde f(y_n)- S_n \tilde f(x_n)} - 1\Big\vert \le  2 \kappa_f e^{2 \kappa_f \; \Omega(1/2)} \, \Omega\big(d(x,y)\big) \le \Gamma \, \Omega\big(d(x,y)\big).
	$$
	Since $ \mathscr L_{\tilde f} \mathds 1 = \mathds 1 $ and $ \Omega $ is concave, we have
	\begin{eqnarray*}
		\Big \vert \mathscr L_{\tilde f}^n \phi (x)  -  \mathscr L_{\tilde f}^n \phi (y)\Big\vert 
		& \le  |\phi|_\Omega \,  \sum_{(x_n, y_n)}  e^{S_n \tilde f(x_n)}   \, \Omega\big(d(x_n,y_n)\big)  +   \Gamma \,  \vert\vert\phi\vert\vert_\infty  \, \Omega\big(d(x,y)\big) \\
		& \le  |\phi|_\Omega \,\,  \Omega\Big( \sum_{(x_n, y_n)}  e^{S_n \tilde f(x_n)}  d(x_n,y_n)\Big)  +   \Gamma \,  \vert\vert\phi\vert\vert_\infty  \, \Omega\big(d(x,y)\big).
	\end{eqnarray*}
	Thanks to the previous lemmas, 
	\begin{eqnarray*}
		d(x_n,y_n) & \le \frac{1}{\prod_{i = 0}^{n-1} \Big( 1 + \lambda_V\big(T^i(x_n), T^i(y_n)\big) \Big)} \, d(x, y) \\ & = \frac{1}{\prod_{i = 0}^{n-1} \Big( 1 +V\big(T^i(x_n)\big) \Big)} \, d(x, y).
	\end{eqnarray*}
	Hence, as $ \Omega $ is increasing, it follows that
	$$
	\Omega\Big( \sum_{(x_n, y_n)}  e^{S_n \tilde f(x_n)}  d(x_n,y_n)\Big)  \le \Omega\Big( \frac{\max h}{\min h} \, \frac{1}{\chi^n} \sum_{x_n \in T^{-n}(x)}  e^{S_n (f - \log(1+V))(x_n)}  \, d(x,y) \Big).
	$$
	Therefore, since $ \Omega $ is subadditive, we conclude that, whenever $d(x,y)<\varrho_1$
	$$
	\Big\vert  \mathscr L_{\tilde f}^{n}  \phi (x)   -  \mathscr L_{\tilde f}^{n} \phi (y)\Big\vert \le  \Gamma \Big( | \phi |_\Omega \;  \Omega\big(\theta(n) \, d(x,y)\big) + \vert\vert\phi\vert\vert_\infty  \; \Omega(d(x,y)) \Big),
	$$
	with $ \theta(n) = \frac{1}{\chi^n} \Big\vert \Big\vert  \mathscr L_{f - \log(1 + V)}^{n} \mathds 1 \Big\vert \Big\vert_\infty $.
\end{proof}

The fact that the sequence $ \{\theta(n)\} $ has a null limit is a consequence of the T-compatibility between moduli.

\begin{lemma}\label{teta}
	As $ n $ goes to infinite, $  \frac{1}{\chi^n} \Big\vert \Big\vert  \mathscr L_{f - \log(1 + V)}^{n} \mathds 1 \Big\vert \Big\vert_\infty $ tends to 0.
\end{lemma}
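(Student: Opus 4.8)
The plan is to reduce the statement to a \emph{monotonicity} property of the normalized transfer operator $ \mathscr L_{\tilde f} $, for which $ \mathscr L_{\tilde f}\mathds 1 = \mathds 1 $. Write $ g := f - \log(1+V) $ and introduce the contraction cocycle
$ c_n(y) := \prod_{i=0}^{n-1}\big(1+V(T^i(y))\big)^{-1} = e^{-S_n\log(1+V)(y)} $.
Using $ e^{S_n f(y)} = e^{S_n\tilde f(y)}\,\chi^n\, h(x)/h(y) $, valid for $ y\in T^{-n}(x) $ since $ T^n(y)=x $, I would rewrite
$$ \frac{1}{\chi^n}\mathscr L_{g}^n\mathds 1(x) = h(x)\sum_{y\in T^{-n}(x)} e^{S_n\tilde f(y)}\,\frac{c_n(y)}{h(y)} \le \frac{\max h}{\min h}\,\Phi_n(x), \qquad \text{where } \Phi_n := \mathscr L_{\tilde f}^n\, c_n . $$
Because $ 0 < \min h \le \max h < \infty $, it then suffices to prove $ \sup_x \Phi_n(x) \to 0 $, where $ \Phi_n $ is the $ \mathscr L_{\tilde f} $-average of $ c_n $; note $ \Phi_n\in C(\mathbb T) $ and $ 0\le\Phi_n\le\mathscr L_{\tilde f}^n\mathds 1=\mathds 1 $.

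The key observation I would establish is that $ \{\Phi_n\} $ is pointwise non-increasing. The cocycle identity $ c_{n+m}=c_n\cdot(c_m\circ T^n) $ combined with $ c_n\le 1 $ gives $ c_{n+m}\le c_m\circ T^n $, and since $ \mathscr L_{\tilde f}^n(\varphi\circ T^n) = \varphi\,\mathscr L_{\tilde f}^n\mathds 1 = \varphi $ for every $ \varphi\in C(\mathbb T) $, positivity of $ \mathscr L_{\tilde f} $ yields
$$ \Phi_{n+m} = \mathscr L_{\tilde f}^{n+m} c_{n+m} \le \mathscr L_{\tilde f}^{m}\big(\mathscr L_{\tilde f}^{n}(c_m\circ T^n)\big) = \mathscr L_{\tilde f}^m c_m = \Phi_m . $$
Hence $ \Phi_n \downarrow \Phi_\infty $ pointwise, with $ \Phi_\infty\ge 0 $ upper semicontinuous as an infimum of continuous functions.

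To identify $ \Phi_\infty $, I would pass to the limit in the exact one-step recursion obtained by regrouping pre-images through $ c_{n+1}=c_1\cdot(c_1\circ T^n)\cdots $, namely $ \Phi_{n+1} = \mathscr L_{\tilde f}(c_1\,\Phi_n) $; the sums being finite, this gives the self-consistency equation $ \Phi_\infty = \mathscr L_{\tilde f}(c_1\,\Phi_\infty) $. Let $ x^* $ maximize the upper semicontinuous $ \Phi_\infty $ on the compact circle and put $ S:=\Phi_\infty(x^*) $. Bounding $ \Phi_\infty(z)\le S $ inside the sum yields $ S \le S\,\mathscr L_{\tilde f}c_1(x^*) = S\,\Phi_1(x^*) $. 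Now $ c_1(z)=(1+V(z))^{-1}<1 $ for every $ z\ne 0 $, since $ V $ is increasing with $ V(0)=0 $ (the origin being the indifferent fixed point) and $ V>0 $ on $ (0,1] $; as $ T $ has degree $ N_V\ge 2 $, no point has all its pre-images at the fixed point, so some $ z\in T^{-1}(x^*) $ has $ c_1(z)<1 $, whence $ \Phi_1(x^*)=\sum_{z\in T^{-1}(x^*)}e^{\tilde f(z)}c_1(z) < \mathscr L_{\tilde f}\mathds 1(x^*)=1 $. Thus $ S\le S\,\Phi_1(x^*) $ with $ \Phi_1(x^*)<1 $ forces $ S=0 $, i.e. $ \Phi_\infty\equiv 0 $.

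Finally, since the $ \Phi_n $ are continuous and decrease pointwise to the continuous limit $ 0 $ on the compact space $ \mathbb T $, Dini's theorem upgrades this to uniform convergence, so $ \sup_x\Phi_n(x)\to 0 $ and therefore $ \theta(n)\to 0 $. I expect the only delicate step to be the passage from pointwise to uniform decay: near the indifferent fixed point $ c_1 $ approaches $ 1 $, so there is no uniform one-step contraction, and it is precisely the interplay of the self-consistency equation with the fact that every point possesses a pre-image off the fixed point (degree $ \ge 2 $) that forces $ \Phi_\infty $ to vanish identically and makes Dini applicable. The hypothesis of $T$-compatibility enters upstream: it is what secures, via the Ruelle-Perron-Frobenius theorem of \cite{GI22}, the bounded positive eigenfunction $ h $ and eigenvalue $ \chi $ underlying the normalization $ \mathscr L_{\tilde f}\mathds 1=\mathds 1 $.
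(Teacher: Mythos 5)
Your proof is correct, and its skeleton is the same as the paper's: your $\Phi_n = \mathscr L_{\tilde f}^n c_n$ is precisely the paper's sequence $\mathscr L_{\tilde f - \log(1+V)}^{n}\mathds 1$, your cocycle monotonicity $\Phi_{n+m}\le\Phi_m$ is the paper's inductive decrease starting from $\mathscr L_{\tilde f - \log(1+V)}\mathds 1 < \mathds 1$ (which, like you, the paper gets from the existence of at least one pre-image off the indifferent fixed point), the maximum-point contradiction forcing the limit to vanish is identical, and both proofs finish with the comparison $\theta(n) \le \frac{\max h}{\min h}\,\|\Phi_n\|_\infty$. The one genuine difference is the order of operations around Dini's theorem, and here your arrangement is the more careful one. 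The paper invokes Dini on the decreasing sequence \emph{before} anything is known about the limit $g$, in order to justify $\mathscr L_{\tilde f - \log(1+V)}\, g = g$; but Dini's theorem requires the pointwise limit to be continuous, which is not established at that stage. You avoid this: the fixed-point equation $\Phi_\infty = \mathscr L_{\tilde f}(c_1\Phi_\infty)$ follows from pointwise convergence alone because the pre-image sums are finite, the maximum of $\Phi_\infty$ is attained by upper semicontinuity (infimum of continuous functions), the contradiction then gives $\Phi_\infty \equiv 0$, and only \emph{then} do you apply Dini, at which point its continuity hypothesis is trivially satisfied. So your proposal is not merely a blind reconstruction of the paper's argument; it quietly repairs the paper's slightly circular use of Dini while buying the same conclusion.
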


\begin{proof}
	For the normalized potential $ \tilde f= f + \log h - \log h \circ T - \log \chi $, it is easy to see that $  \mathscr L_{\tilde f - \log(1 + V)} \mathds 1  < \mathds 1 $ everywhere on $ \mathbb T $.
	Inductively it turns out that  for all $ n $
	$$  \mathscr L_{\tilde f - \log(1 + V)}^{n+1} \mathds 1  <   \mathscr L_{\tilde f - \log(1 + V)}^{n} \mathds 1.  $$  
	Consider then the pointwise limit function $ 0 \le g := \lim_{n \to \infty}  \mathscr L_{\tilde f - \log(1 + V)}^{n} \mathds 1 $. We claim that $ g \equiv 0 $.
	By Dini's theorem, the convergence of $ \{  \mathscr L_{\tilde f - \log(1 + V)}^{n} \mathds 1 \} $ to $ g $ is uniform. Thus clearly  $ \mathscr L_{\tilde f - \log(1 + V)} g = g $.
	If it held $ g(x) = \| g \|_\infty > 0 $ for some $ x \in \mathbb T $, then 
	$$ 0 <  \| g \|_\infty =  \mathscr L_{\tilde f - \log(1 + V)} g (x) \le  \| g \|_\infty \; \mathscr L_{\tilde f - \log(1 + V)} \mathds 1 (x)  $$ 
	would lead to the contradiction  $ \mathds 1(x) \le \mathscr L_{\tilde f - \log(1 + V)} \mathds 1(x) $.
	
	We have shown that  $ \Big\vert \Big\vert  \mathscr L_{\tilde f - \log(1 + V)}^{n} \mathds 1 \Big\vert \Big\vert_\infty  $ tends to 0.
	As $ \Big\vert \Big\vert  \mathscr L_{\tilde f - \log(1 + V)}^{n} \mathds 1 \Big\vert \Big\vert_\infty  \ge \frac{\min h}{\max h}  \, \frac{1}{\chi^n} \Big\vert \Big\vert  \mathscr L_{f - \log(1 + V)}^{n} \mathds 1 \Big\vert \Big\vert_\infty  $,
	the proof is complete.
\end{proof}

We are able now to prove the spectral gap property.

\begin{proof}[Proof of Theorem~\ref{lacunaespectral}]
	The spectrum of $ \mathscr L_{\tilde f} $ is the spectrum of $ \mathscr L_f $ scaled by $ \chi^{-1} $. 
	Therefore, if we denote by $ \mathbb C_\Omega^\perp $  the set of complex-valued continuous functions $ \phi $ on $ \mathbb T $ 
	such that their real and imaginary parts belong to $  \mathscr C_{\Omega}(\mathbb T) $ and $ \int \phi \, d\mu = 0 $, it suffices to 
	argue that the spectral radius of the restriction of $ \mathscr L_{\tilde f} $ to $ \mathbb C_\Omega^\perp $ is strictly less than $ 1 $.
	
	Given $ \phi \in \mathbb C_\Omega^\perp $, thanks to \cite[Proposition 10]{GI22}, we have
	\begin{equation}\label{limitenulo}
		\mathscr L_{\tilde f}^n \phi = \frac{1}{h} \, \frac{1}{\chi^n} \, \mathscr L_f^n (h \phi)\,\,\underrightarrow{n \to \infty}\,  \int \phi \, d\mu = 0, \qquad \textrm{(uniformly)}. 
	\end{equation}
	If we consider now $ \| \cdot \|_\Omega = \| \cdot \|_\infty + | \cdot |_\Omega $, from the compactness (with respect to the uniform topology) of
	$$ B_1(\mathbb C_\Omega^\perp) := \{ \psi \in \mathbb C_\Omega^\perp : \| \psi \|_\Omega \le 1 \}, $$
	it is easy to show that, for any $ \epsilon > 0 $, there exists a positive integer $ n_0 = n_0(\epsilon) $ such that
	$$ \big \|  \mathscr L_{\tilde f}^n \phi \big \|_\infty < \epsilon, \qquad \forall \, n \ge n_0, \, \forall \, \phi \in B_1(\mathbb C_\Omega^\perp). $$
	For $ \phi \in B_1(\mathbb C_\Omega^\perp) $, the previous proposition ensures that
	$$
	\big\vert  \mathscr L_{\tilde f}^{N+n}  \phi \big\vert_\Omega \le  \tilde\Gamma \, \big(  \tau(N) \, | \mathscr L_{\tilde f}^n  \phi  |_\Omega +  \vert\vert \mathscr L_{\tilde f}^n  \phi  \vert\vert_\infty \big)
	\le \tilde \Gamma^2 \, \big(  \tau(n) +  1 \big) \, \tau(N) + \tilde \Gamma \, \vert\vert \mathscr L_{\tilde f}^n  \phi  \vert\vert_\infty.
	$$
	Thus, as $ \Omega $ vanishes orderly, for $ n $ and $ N $ large enough, one obtains $ \|   \mathscr L_{\tilde f}^{N+n}  \phi \|_\Omega \le 2/3 $ for all $ \phi \in B_1(\mathbb C_\Omega^\perp) $.
	Since the spectral radius of $ \mathscr L_{\tilde f} |_{ \mathbb C_\Omega^\perp} $ may be compute as $ \inf_{k \ge 0}  \|  (\mathscr L_{\tilde f}  |_{ \mathbb C_\Omega^\perp})^k \|_\Omega^{1/k} $, 
	it is no larger than $ (2/3)^{1/(N+n)} $.
\end{proof}

We recapitulate the standard argument to obtain the exponential decay of correlations from the spectral gap property.
As the spectral radius of  $ \mathscr L_{\tilde f} |_{ \mathbb C_\Omega^\perp} $ is strictly smaller than $ 1$, there are constants $ \rho \in (0, 1) $ and $ K_0 > 0 $ such that 
$  \| \mathscr L_{\tilde f}^n  \psi \|_\Omega  \le K_0 \, \| \psi \|_\Omega \, \rho^n  $ for all $ \psi \in \mathbb C_\Omega^\perp $ and $ n \ge 1 $. Therefore, for $ \phi, \psi \in \mathscr C_{\Omega}(\mathbb T) $,
with $ \int \psi \, d\mu = 0 $, one has
$$ \Big | \int \phi \,\, \psi \circ T^n \, d\mu \Big | \le \| \phi \|_{L^1(\mu)}  \, \| \mathscr L_{\tilde f}^n  \psi \|_\infty \le K_0 \, \| \phi \|_{L^1(\mu)}  \,  \| \psi \|_\Omega \, \rho^n, $$  
from which Theorem~\ref{decaimentoexponencial} immediately follows.

With respect to the Central Limit Theorem (Theorem~\ref{CLT}), this can be derived from the general theorem below, 
which is an abstraction of results due to \cite{DF37, Nag57, RE83, GH88}. For a nice exposition of the proof of this theorem, watch \cite{Sar20}.

\begin{teorema}
	Let $ \mu $ be a $T$-invariant probability measure with respect to which the system is (strongly) mixing.
	Let $ \mathscr L $ be the bounded linear operator whose action on $ L^1(\mu) $ is characterized by
	$$ \int \Phi \, \cdot \, \mathscr L \Psi \, d\mu = \int \Phi \circ T \, \cdot \, \Psi \, d\mu, \quad \forall \, \Phi \in L^\infty(\mu), \, \forall \, \Psi \in L^1(\mu). $$
	Suppose that $ \mathscr L $ has the spectral gap property on a Banach space $ (\mathcal B, \| \cdot \|_{\mathcal B}) $ that contains the constants,
	is closed under multiplication, and satisfies both $ \| \varphi \, \psi \|_{\mathcal B} \le \| \varphi \|_{\mathcal B} \, \| \psi \|_{\mathcal B} $ and $ \| \varphi \|_{L^1(\mu)} \le \| \varphi \|_{\mathcal B}  $,
	for all $ \varphi, \psi \in \mathcal B $.  Then if $ \phi \in \mathcal B $ is bounded and is not cohomologous to a constant, there exists
	$ \gamma = \gamma(\phi) > 0 $ such that
	$$ \lim_{n \to \infty} \mu \Big\{  x \, : \frac{1}{\sqrt{n}} S_n\big(\phi - \int \phi \, d\mu \big)(x) < b \Big\} = \frac{1}{\gamma\sqrt{2\pi}} \int_{- \infty}^b e^{-t^2/2\gamma^2} \, dt. $$
\end{teorema}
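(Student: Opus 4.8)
The plan is to follow the Nagaev--Guivarc'h spectral (characteristic-function) method. After replacing $\phi$ by $\phi - \int\phi\,d\mu$, so that $\int\phi\,d\mu = 0$, it suffices by L\'evy's continuity theorem to show that for each fixed $t\in\mathbb R$ one has $\int e^{it S_n\phi/\sqrt n}\,d\mu \to e^{-\gamma^2 t^2/2}$. The bridge between these generating quantities and the operator $\mathscr L$ is the twisted family $\mathscr L_z\Psi := \mathscr L(e^{z\phi}\Psi)$, $z\in\mathbb C$. First I would record the multiplicative identity $\mathscr L\big((g\circ T)\,\Psi\big) = g\,\mathscr L\Psi$, which is immediate from the defining duality, and iterate it to obtain $\mathscr L_z^n\mathds 1 = \mathscr L^n(e^{z S_n\phi})$; testing against $\Phi = \mathds 1$ then yields the key identity $\int e^{z S_n\phi}\,d\mu = \int \mathscr L_z^n\mathds 1\,d\mu$ for every $n$.

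Next I would set up the perturbation. Since $\phi$ is bounded and $\mathcal B$ is a Banach algebra with submultiplicative norm containing the constants, multiplication by $e^{z\phi} = \sum_k z^k\phi^k/k!$ is a bounded operator on $\mathcal B$ depending analytically on $z$, so $z\mapsto \mathscr L_z$ is an analytic family. At $z=0$ the spectral gap hypothesis provides a simple leading eigenvalue $1$ (with $\mathscr L\mathds 1 = \mathds 1$ and dual eigenfunctional $\Psi\mapsto\int\Psi\,d\mu$, forcing the projection $\Pi_0\Psi = (\int\Psi\,d\mu)\,\mathds 1$), the rest of the spectrum lying in a disc of radius $\beta<1$. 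By Kato's analytic perturbation theory, for $z$ near $0$ there persist an analytic simple eigenvalue $\lambda(z)$ with $\lambda(0)=1$ and an analytic projection $\Pi_z$, while the remainder of the spectrum stays inside a disc of radius $\beta'<1$ uniformly. This gives the decomposition $\mathscr L_z^n = \lambda(z)^n\Pi_z + N_z^n$ with $\|N_z^n\|_{\mathcal B}\le C(\beta')^n$, whence $\int\mathscr L_z^n\mathds 1\,d\mu = \lambda(z)^n\,\ell(z) + O((\beta')^n)$, where $\ell(z) := \int\Pi_z\mathds 1\,d\mu$ is analytic with $\ell(0)=1$.

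The core computation is the Taylor expansion of $\lambda$ at the origin. Differentiating the key identity (or using the standard eigenvalue-derivative formulas) I would obtain $\lambda'(0) = \int\phi\,d\mu = 0$ and, crucially, $\log\lambda(z) = \tfrac12\gamma^2 z^2 + o(z^2)$ with $\gamma^2 = \int\phi^2\,d\mu + 2\sum_{k\ge1}\int\phi\,(\phi\circ T^k)\,d\mu$, the Green--Kubo asymptotic variance; the mixing hypothesis guarantees absolute convergence of this series and identifies $\gamma^2$ with $\lim_n \tfrac1n\int (S_n\phi)^2\,d\mu$. Substituting $z = it/\sqrt n$ gives $n\log\lambda(it/\sqrt n) \to -\tfrac12\gamma^2 t^2$, so that $\int e^{it S_n\phi/\sqrt n}\,d\mu = \lambda(it/\sqrt n)^n\,\ell(it/\sqrt n) + O((\beta')^n) \to e^{-\gamma^2 t^2/2}$, which is the desired convergence of characteristic functions and hence the stated CLT.

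Finally I would address the nondegeneracy $\gamma^2 > 0$, which is exactly where the hypothesis that $\phi$ is not cohomologous to a constant enters: the vanishing $\gamma^2 = 0$ is equivalent, via the $L^2(\mu)$-analysis of $S_n\phi$, to $\phi - \int\phi\,d\mu$ being an $L^2$-coboundary $u\circ T - u$, i.e.\ to $\phi$ being cohomologous to a constant, which we have excluded. I expect the main obstacle to be the perturbation-theoretic heart: verifying that $z\mapsto\mathscr L_z$ genuinely meets the hypotheses of Kato's theorem on $\mathcal B$ (analyticity together with uniform persistence of the gap) and then extracting the second-order coefficient $\gamma^2$ rigorously --- both the derivative computation and the proof that its positivity coincides with non-cohomology to a constant are the delicate points, the remainder being the now-classical L\'evy/Nagaev machinery.
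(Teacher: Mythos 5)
Your proposal is correct and coincides with the approach behind the paper's treatment of this statement: the paper does not prove the abstract theorem at all, but quotes it as known, citing \cite{DF37}, \cite{Nag57}, \cite{RE83}, \cite{GH88} and Sarig's exposition \cite{Sar20}, and those sources establish it by exactly the Nagaev--Guivarc'h perturbed-transfer-operator argument you outline (twisted operator $\mathscr L_z$, Kato perturbation of the simple eigenvalue, quadratic expansion of $\log\lambda(z)$, L\'evy continuity, and the coboundary characterization of $\gamma^2=0$). One small correction: the absolute convergence of the Green--Kubo series follows from the spectral gap (which gives exponential decay of $\int \phi\,(\phi\circ T^k)\,d\mu$ for $\phi\in\mathcal B$ bounded, via $\|\mathscr L^k\phi - (\int\phi\,d\mu)\mathds 1\|_{\mathcal B}\le C\beta^k$ and $\|\cdot\|_{L^1(\mu)}\le\|\cdot\|_{\mathcal B}$), not from mixing alone, which provides no rate.
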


For the sake of completeness, we include the only aspect that has not even been implicitly discussed so far.
As for Lemma~\ref{teta}, the next proposition is an exclusive consequence of the $T$-compatibility between moduli, without the intervention of any additional assumption used in the central results of this note.

\begin{proposition}
	The dynamical system $ (\mathbb T, T) $ is strongly mixing with respect to the Gibbs-equilibrium state $ \mu $ associated with the potential $ f \in \mathscr C_{\omega}(\mathbb T)$.
\end{proposition}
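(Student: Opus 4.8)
The plan is to exploit the transfer-operator description of $ \mu $ together with the uniform convergence recorded in \eqref{limitenulo}, which rests solely on the $T$-compatibility between the moduli (via \cite[Proposition~10]{GI22}). Recall that strong mixing amounts to showing, for all $ \phi, \psi \in L^2(\mu) $,
$$ \int \phi \,\, \psi\circ T^n\, d\mu \;\longrightarrow\; \int \phi\, d\mu \int \psi\, d\mu. $$
First I would establish the duality identity $ \int \phi\,(\psi\circ T^n)\,d\mu = \int \psi\,\mathscr L_{\tilde f}^n\phi\,d\mu $. This follows from the eigen-relations $ \mathscr L_f h=\chi h $ and $ \mathscr L_f^*\nu=\chi\nu $ defining $ \mu = h \, d\nu $, together with the elementary transfer identity $ \mathscr L_f\big(G\cdot(\psi\circ T)\big)=\psi\cdot\mathscr L_f G $; writing $ \mathscr L_{\tilde f}\phi=\frac{1}{\chi h}\mathscr L_f(h\phi) $ and iterating yields the claim, so that $ \mathscr L_{\tilde f} $ is exactly the Koopman adjoint with respect to $ \mu $.

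Second, I would verify the mixing limit on the dense class $ \mathscr C_\Omega(\mathbb T) $. For $ \phi\in\mathscr C_\Omega(\mathbb T) $, decomposing $ \phi=(\phi-\int\phi\,d\mu)+\int\phi\,d\mu $ and using $ \mathscr L_{\tilde f}\mathds 1=\mathds 1 $ together with \eqref{limitenulo}, one obtains $ \mathscr L_{\tilde f}^n\phi\to\int\phi\,d\mu $ uniformly. Since every $ \psi\in\mathscr C_\Omega(\mathbb T) $ is bounded, integrating against $ d\mu $ gives $ \int\psi\,\mathscr L_{\tilde f}^n\phi\,d\mu\to\int\psi\,d\mu\int\phi\,d\mu $, that is, correlations decay on $ \mathscr C_\Omega(\mathbb T)\times\mathscr C_\Omega(\mathbb T) $.

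Third, I would upgrade from this dense class to all of $ L^2(\mu) $ by a standard $ \epsilon/3 $ argument. The bilinear correlation functional is uniformly bounded in $ n $ by $ 2\,\|\phi\|_{L^2(\mu)}\,\|\psi\|_{L^2(\mu)} $, thanks to Cauchy--Schwarz and the $T$-invariance $ \|\psi\circ T^n\|_{L^2(\mu)}=\|\psi\|_{L^2(\mu)} $. It then remains to know that $ \mathscr C_\Omega(\mathbb T) $ is dense in $ L^2(\mu) $: it is a subalgebra of $ C(\mathbb T) $ that contains the constants and separates points (for instance $ x\mapsto\Omega(d(x,x_0)) $ belongs to $ \mathscr C_\Omega(\mathbb T) $ and distinguishes $ x_0 $ from any other point), so Stone--Weierstrass makes it uniformly dense in $ C(\mathbb T) $, hence dense in $ L^2(\mu) $. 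Approximating general $ \phi, \psi $ by elements of $ \mathscr C_\Omega(\mathbb T) $, then combining the uniform bound with the second step, closes the argument.

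The step requiring the most care is the last one, since the proposition is meant to rely on $T$-compatibility alone, with no recourse to concavity or the vanishing-orderly property. This is precisely why I favor the Stone--Weierstrass route for density, which only needs $ \Omega $ to be a genuine subadditive modulus (a condition that is in any case available here), rather than arguing through the inclusion of Lipschitz functions, which would tacitly invoke concavity of $ \Omega $.
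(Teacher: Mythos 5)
Your proof is correct under the standing hypotheses of the note, and its skeleton (the duality $\int \phi \,\, \psi\circ T^n\, d\mu = \int \psi\, \mathscr L_{\tilde f}^n\phi\, d\mu$, uniform convergence of the iterates $\mathscr L_{\tilde f}^n$, then a density argument in $L^2(\mu)$) coincides with the paper's. The genuine difference is the choice of dense class. The paper works with all of $C(\mathbb T)$: the uniform convergence $\| \mathscr L_{\tilde f}^n\psi\|_\infty \to 0$ it invokes ``by the same reason as in \eqref{limitenulo}'' is the Ruelle--Perron--Frobenius convergence of \cite{GI22}, valid for every continuous mean-zero observable and not only for those in $\mathscr C_\Omega(\mathbb T)$; the density of $C(\mathbb T)$ in $L^2(\mu)$ then closes the argument with no Stone--Weierstrass step at all. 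You instead restrict to $\mathscr C_\Omega(\mathbb T)$ --- a defensible reading, since \eqref{limitenulo} is stated only on $\mathbb C_\Omega^\perp$ --- and pay for it with the extra density step. That step is where your calibration slips: for Stone--Weierstrass you need $\mathscr C_\Omega(\mathbb T)$ to separate points, and your separating function $x \mapsto \Omega(d(x,x_0))$ belongs to $\mathscr C_\Omega(\mathbb T)$ only when $\Omega$ is subadditive (and strictly positive off $0$). Subadditivity is not part of the definition of a modulus in this paper, nor is it a consequence of $T$-compatibility (condition \eqref{compatibilidade} places no such constraint on $\Omega$; for instance, for $\Omega(t)=t^2$ the space $\mathscr C_\Omega(\mathbb T)$ reduces to the constants and is certainly not dense); within this note subadditivity is available only through the concavity of $\Omega$. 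So your route does not, as claimed in your final paragraph, avoid concavity-type input any better than the Lipschitz-inclusion route would; both need essentially the same property, and it is precisely the paper's use of $C(\mathbb T)$ as the dense class that lets the proposition rest on $T$-compatibility alone, as the remark preceding it emphasizes. Under the hypotheses in force for the Central Limit Theorem application (where $\Omega$ is concave), your argument is complete and correct.
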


\begin{proof}
	It suffices to show that $ \int \phi \circ T^n \, \psi \, d\mu \to \int \phi \, d\mu \, \int \psi \, d\mu $ as $ n \to \infty $, for any $ \phi, \psi \in L^2 (\mu) $. 
	However, given $ \phi, \psi \in C(\mathbb T) $, with $ \int \psi \, d\mu = 0 $, by the same reason as in~\ref{limitenulo}, one has
	$$ \Big | \int \phi\circ T^n \, \psi \, d\mu \Big | =  \Big | \int \phi \, \mathscr L_{\tilde f}^n \psi \, d\mu \Big | \le \| \phi\|_ {L^1(\mu)} \, \| \mathscr L_{\tilde f}^n \psi \|_\infty   \Rightarrow 0, n \to \infty. $$
	Thus the conclusion follows from the denseness of continuous functions in $ L^2(\mu) $.
\end{proof}

\section{Illustrations}

We gather in this section some examples of applications of our results. 
We compile both innovative contributions and situations already recorded in the literature.

\paragraph{A slowly varying scenario.}  
In \cite{GI22}, we have considered a dynamics described for $ x $ small enough as 
$$ T_k(x) = x \big( 1 + A_k (\log^k 1/x)^{-1} \big) \mod 1, $$
where $ A_k > 0 $ is a constant and $ \log^k $ stands for the $k$-times composition of the logarithm function. 
Taking into account moduli defined in a neighborhood of the origin as  
$ \omega_0(x) = (\log^k 1/x)^{-1} (\log 1/x)^{-1} (\log^2 1/x)^{-2} $ and $ \Omega_0(x) = (\log^2 1/x)^{-1} $, we have showed via condition~\ref{condicao_suficiente} that a Ruelle-Perron-Frobenius theorem holds. 
Since it is a calculus exercise to check that, for any fixed $ s \in (0, 1) $, both 
$$ \omega_s(x) = x^s (\log^k 1/x)^{-1} (\log 1/x)^{-1} (\log^2 1/x)^{-2}
\quad \textrm{ and } \quad
\Omega_s(x) = x^s  (\log^2 1/x)^{-1} $$
are concave in a neighborhood of the origin, from Proposition~\ref{ganho} we have the following illustrative result.

\begin{proposition}
	Given a positive integer $ k $ and $ s \in (0,1) $, regarding the dynamical system $ T_k $ above,
	for any potential $ f \in \mathscr C_{\omega_s}(\mathbb T) $, there exists a unique associated Gibbs-equilibrium state $ \mu $ which has exponential decay of correlations and satisfies the Central Limit Theorem with respect to the class $ \mathscr C_{\Omega_s}(\mathbb T) $. 
\end{proposition}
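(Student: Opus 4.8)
The plan is to verify, for the specific system $T_k$ and the moduli $\omega_s$, $\Omega_s$, every hypothesis of the three central theorems and then quote them. Since the conclusions (spectral gap, exponential decay of correlations, Central Limit Theorem) have already been established in full generality, the task here is purely one of checking that the present pair of moduli qualifies; no new dynamical input is needed.

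First I would record the two structural facts about the moduli. The text preceding the statement already notes that $\omega_s$ and $\Omega_s$ are concave in a neighborhood of the origin for $s \in (0,1)$; since $T$-compatibility and the Doeblin--Fortet--Lasota--Yorke inequality of Proposition~\ref{desigualdadeDFLY} involve only pairs of points with $d(x,y) < \varrho_1$, it suffices to extend $\omega_s$ and $\Omega_s$ to non-decreasing concave functions on all of $[0,\infty)$ in any convenient way, their behaviour away from $0$ being irrelevant. Next, as the triple $(V, \omega_0, \Omega_0)$ was shown in \cite{GI22} to satisfy condition~\ref{condicao_suficiente}, Proposition~\ref{ganho} applies verbatim and yields at once that $(V, \omega_s, \Omega_s)$ also satisfies condition~\ref{condicao_suficiente} and that $\Omega_s$ vanishes orderly.

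With these facts in hand I would assemble the hypotheses of Theorem~\ref{lacunaespectral}. Because $\Omega_s$ is concave and $(V, \omega_s, \Omega_s)$ satisfies condition~\ref{condicao_suficiente}, the sufficient criterion \cite[Proposition~7]{GI22} guarantees that $\Omega_s$ is $T$-compatible with respect to $\omega_s$ in the sense of \ref{compatibilidade}. This same $T$-compatibility, fed into the Ruelle--Perron--Frobenius construction of \cite{GI22}, produces the positive eigenfunction $h$ and the eigenmeasure $d\nu$, hence the existence and uniqueness of the Gibbs-equilibrium state $d\mu = h\,d\nu$ for every $f \in \mathscr C_{\omega_s}(\mathbb T)$. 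At this point every hypothesis of Theorem~\ref{lacunaespectral} is met---$\Omega_s$ is $T$-compatible with respect to $\omega_s$, concave, and vanishes orderly---so the transfer operator $\mathscr L_f$ enjoys the spectral gap property on $\mathscr C_{\Omega_s}(\mathbb T)$. Since all hypotheses of Theorem~\ref{lacunaespectral} thereby carry over, Theorems~\ref{decaimentoexponencial} and~\ref{CLT} deliver directly the exponential decay of correlations and the Central Limit Theorem for observables in $\mathscr C_{\Omega_s}(\mathbb T)$ that are not cohomologous to a constant.

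I do not expect a genuine obstacle here. The two endpoints of the range of $s$ play complementary roles worth flagging: the lower bound $s > 0$ is exactly what Proposition~\ref{ganho} exploits to force $\Omega_s$ to vanish orderly, the gained factor $x^s$ driving the relevant limit to zero, while the upper bound $s < 1$ is what preserves concavity of $x^s$ and hence of the products $\omega_s$ and $\Omega_s$ near the origin. Consequently the concavity verification is the sole computation required, and it is routine.
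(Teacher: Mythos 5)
Your proposal is correct and follows essentially the same route as the paper: check concavity of $\omega_s$ and $\Omega_s$ near the origin, invoke Proposition~\ref{ganho} to get condition~\ref{condicao_suficiente} for $(V,\omega_s,\Omega_s)$ together with the orderly vanishing of $\Omega_s$, pass to $T$-compatibility via \cite[Proposition~7]{GI22}, and then quote Theorems~\ref{lacunaespectral}, \ref{decaimentoexponencial} and~\ref{CLT}. Your added remarks on the complementary roles of the bounds $s>0$ and $s<1$ make explicit what the paper leaves as a ``calculus exercise,'' but introduce nothing that departs from its argument.
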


\paragraph{A central family.}
Perhaps the most relevant class of examples is formed by the Manneville-Pomeau maps,
whose analysis is connected to the mathematical modeling of intermittency \cite{Man80,PM80}.
These maps are defined as
$$ M_q(x) = x (1 + x^q) \mod 1, \qquad \textrm{with } q > 0. $$
By using condition~\ref{condicao_suficiente}, it is not difficult to formulate a general result concerning this family. 

\begin{proposition}
	Suppose $ \theta : (0, 1] \to [0, +\infty) $ is a continuous function such that 
	(in a neighborhood of the origin)
	$ x \mapsto x^P \theta(x) $ is concave and non-decreasing as well as 
	$ \lim_{x \to 0^+} x^P \sup_{0 < \mathsf d < 1/2} \frac{\theta ({\mathsf d} x )}{\theta (\mathsf d)} = 0 $ 
	for some constant $ P > 0 $. 
	Assume also that $ p > P $ and $ 0< q \le p - P $ satisfy 
	$ \liminf_{x \to 0^+} x^{q - p + P} \big( \frac{\theta((1+c)x)}{\theta(x)}(1+c)^P - 1\big) > 0 $ for any $ c > 0 $ sufficiently small.
	Denote
	$$ \omega(x):=x^p \theta(x) \qquad \textrm{and} \qquad \Omega(x):=x^P \theta(x). $$
	Then, with respect to the Manneville-Pomeau map $ M_q $, 
	given a potential $ f \in \mathscr C_{\omega}(\mathbb T) $, 
	the transfer operator $ \mathscr L_f $ acting on $ \mathscr C_{\Omega}(\mathbb T) $ satisfies a Ruelle-Perron-Frobenius theorem
	with spectral gap.
	In particular, the unique associated Gibbs-equilibrium state has exponential decay of correlations and satisfies a Central Limit Theorem. 
\end{proposition}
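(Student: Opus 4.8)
The plan is to read this statement as a direct application of Theorem~\ref{lacunaespectral} (together with Theorems~\ref{decaimentoexponencial} and~\ref{CLT}) to the data $ V(x) = x^q $, $ \omega(x) = x^p \theta(x) $ and $ \Omega(x) = x^P \theta(x) $, so that the whole task reduces to checking that the three displayed hypotheses on $ \theta $ are exactly the translations of the three assumptions required there: $T$-compatibility of $ \Omega $ with respect to $ \omega $, concavity of $ \Omega $, and the vanishing orderly property. First I would confirm the structural requirements on the dynamics: for $ M_q $ one has $ V(x) = x^q $, which is continuous and increasing on $ [0, \infty) $ with $ V(1) = 1 $ a positive integer, and which is regularly varying since $ V(tx)/V(x) = t^q $, so its exponent is $ \sigma = q $. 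I would also note that $ \omega $ and $ \Omega $ are legitimate moduli: concavity and monotonicity of $ \Omega = x^P \theta(x) $ are assumed outright near the origin, while $ \omega(x) = x^{p-P}\, \Omega(x) $ is non-decreasing because $ p > P $, and both vanish at $ 0 $.

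The key step is $T$-compatibility, which I would obtain through the sufficient condition~\ref{condicao_suficiente}, available because $ \Omega $ is concave. Substituting the data gives
$$ \frac{V(x)}{\omega(x)}\big(\Omega((1+c)x) - \Omega(x)\big) = \frac{x^{q-p}}{\theta(x)}\, x^P\Big((1+c)^P \theta((1+c)x) - \theta(x)\Big) = x^{q-p+P}\Big(\frac{\theta((1+c)x)}{\theta(x)}(1+c)^P - 1\Big), $$
whose $ \liminf $ as $ x \to 0^+ $ is strictly positive for every small $ c > 0 $ precisely by hypothesis. Hence condition~\ref{condicao_suficiente} holds, and by \cite[Proposition~7]{GI22} the modulus $ \Omega $ is $T$-compatible with respect to $ \omega $.

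For the vanishing orderly property I would simply compute, for $ 0 < \mathsf d < 1/2 $,
$$ \frac{\Omega(\mathsf d x)}{\Omega(\mathsf d)} = \frac{(\mathsf d x)^P \theta(\mathsf d x)}{\mathsf d^P \theta(\mathsf d)} = x^P\, \frac{\theta(\mathsf d x)}{\theta(\mathsf d)}, $$
so that $ \sup_{0 < \mathsf d < 1/2} \Omega(\mathsf d x)/\Omega(\mathsf d) = x^P \sup_{0 < \mathsf d < 1/2} \theta(\mathsf d x)/\theta(\mathsf d) $, whose limit as $ x \to 0^+ $ vanishes exactly by the second assumption on $ \theta $. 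With all three hypotheses in place, the Ruelle-Perron-Frobenius theorem of \cite{GI22} --- itself a consequence of the same $T$-compatibility --- first furnishes the maximal eigenvalue $ \chi $, a positive eigenfunction $ h $ and the unique Gibbs-equilibrium state $ d\mu = h\, d\nu $; Theorem~\ref{lacunaespectral} then provides the spectral gap, whereupon Theorems~\ref{decaimentoexponencial} and~\ref{CLT} yield exponential decay of correlations and the Central Limit Theorem for observables in $ \mathscr C_\Omega(\mathbb T) $.

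The computations above are mechanical, so there is no substantial obstacle in the usual sense; the only point requiring care is that the three conditions on $ \theta $ are imposed only in a neighborhood of the origin, whereas $T$-compatibility and the vanishing orderly property concern the whole circle. This is harmless because the indifferent fixed point at the origin is the single obstruction to uniform expansion: away from $ 0 $ the quantity $ V(x) = x^q $ is bounded below, the inverse branches contract uniformly, and the sums defining compatibility are controlled as in the uniformly expanding regime --- exactly the behavior already absorbed by the framework of \cite{GI22}. Thus the passage from the local estimates to the global statements is automatic, and the proposition follows.
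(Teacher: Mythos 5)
Your proposal is correct and follows exactly the route the paper intends for this proposition: verify condition~\ref{condicao_suficiente} for $V(x)=x^q$ by the algebraic identity $\frac{V(x)}{\omega(x)}\big(\Omega((1+c)x)-\Omega(x)\big)=x^{q-p+P}\big(\frac{\theta((1+c)x)}{\theta(x)}(1+c)^P-1\big)$, check the orderly vanishing of $\Omega$ via $\Omega(\mathsf d x)/\Omega(\mathsf d)=x^P\theta(\mathsf d x)/\theta(\mathsf d)$, and then invoke \cite[Proposition~7]{GI22} together with the Ruelle--Perron--Frobenius theorem of \cite{GI22} and Theorems~\ref{lacunaespectral}, \ref{decaimentoexponencial} and \ref{CLT}. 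Your closing remark on the local-versus-global nature of the hypotheses is a point the paper glosses over, and your treatment of it is sound.
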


When looking for a systematized statement as above, we realized that we had unnecessarily assumed in our study \cite{GI22} the concavity of $\omega$. 
Actually, the main results on the existence and uniqueness of Gibbs-equilibrium states rest exclusively on $T$-compatibility.  
Only the concavity of $ \Omega $ is used once in all that work: during the proof of \cite[Proposition 7]{GI22} to show that~\ref{condicao_suficiente} provides a sufficient condition for $T$-compatibility.
In order to completely avoid concavity in that article, in a reformulation of the aforementioned proposition, condition~\ref{condicao_suficiente} could be replaced by the broader hypothesis
$$ \liminf_{x \to 0^+} \frac{\Omega\big(x\big(1+cV(x)\big)\big) - \Omega\big(x\big)}{\omega(x)} > 0 \qquad \textrm{for $c>0$ small enough}, $$
as the reader can easily see when analyzing the key inequalities in that proof.
However, the concavity of $ \Omega $ is used in this note.

Regarding the result, note that for $ \theta \equiv 1 $ we recover H\"older moduli of continuity.
The conclusions thus obtained form part of results already present in the literature (see, for instance, \cite{LR14a,LR14b,Klo20}). For $ \theta(x) = 1 + | \log x | $ in turn, we deal with locally H\"older continuous functions with respect to which we have not identified known prior results.

\paragraph{An inquiry suggestion.}
Karamata theory (see \cite{Sen76} for details) states that $ V $ has the form $ V(x) = x^\sigma L(x) $, where the function
$ L $ satisfies $ \lim_{x \to 0} L(tx) / L(x) = 1 $ for all $ t > 0 $. Let us focus on the case $ \sigma \in [0, 1) $.
If we thus consider the moduli of continuity $ \omega(x) = x L(x) $ and $ \Omega(x) = x^\tau $, with $ 0 < \tau \le 1 - \sigma $,
clearly
$$ \frac{V(x)}{\omega(x)} \big( \Omega\big((1+c)x\big) - \Omega\big(x\big) \big) = x^{\sigma - 1 + \tau} \big( (1+ c)^\tau - 1 \big), $$
and the sufficient condition for $T$-compatibility is guaranteed. We have thus the following result.  

\begin{proposition} \label{sigma menor que 1}
	For the dynamics $ T(x) = x(1 + V(x)) \mod 1 $, $ x \in [0, 1) $, if $ V $ is regularly varying with $ \sigma \in [0, 1) $,
	then H\"older observables of H\"older exponent at most $ 1 - \sigma $ exhibit exponential decay of
	correlations with respect to equilibrium states associated with potentials that have $ \omega(x) = x^{1-\sigma}V(x) $ as modulus
	of continuity.
\end{proposition}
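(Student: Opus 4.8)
The plan is to verify that the triple $(V, \omega, \Omega)$, with $\omega(x) = x^{1-\sigma}V(x)$ and $\Omega(x) = x^\tau$ for a fixed exponent $\tau \in (0, 1-\sigma]$, meets every hypothesis of Theorem~\ref{lacunaespectral}, and then to read off the conclusion from Theorem~\ref{decaimentoexponencial}. The H\"older observables of exponent $\tau \le 1-\sigma$ are precisely the elements of $\mathscr C_\Omega(\mathbb T)$, so establishing the spectral gap for $\mathscr L_f$ on this space is exactly what is needed.

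First I would invoke Karamata's representation to write $V(x) = x^\sigma L(x)$ with $L$ slowly varying, which makes $\omega(x) = x L(x)$ and produces the clean cancellation $V(x)/\omega(x) = x^{\sigma-1}$. Feeding this into the sufficient condition~\ref{condicao_suficiente} yields, for every small $c>0$,
$$ \frac{V(x)}{\omega(x)}\big(\Omega((1+c)x) - \Omega(x)\big) = x^{\sigma-1+\tau}\big((1+c)^\tau - 1\big), $$
and since the exponent $\sigma - 1 + \tau$ is nonpositive (because $\tau \le 1-\sigma$) while $(1+c)^\tau - 1 > 0$, the $\liminf$ as $x \to 0^+$ is strictly positive. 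By \cite[Proposition~7]{GI22} this already secures the $T$-compatibility of $\Omega$ with respect to $\omega$, which is the first hypothesis of Theorem~\ref{lacunaespectral}.

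Next I would dispatch the two remaining structural requirements on $\Omega$. Concavity is immediate because $\tau \in (0,1]$ makes $x \mapsto x^\tau$ concave on $[0,\infty)$. For the vanishing-orderly property, the homogeneity of the power function gives $\Omega(\mathsf d x)/\Omega(\mathsf d) = x^\tau$ independently of $\mathsf d$, hence $\sup_{0<\mathsf d<1/2}\Omega(\mathsf d x)/\Omega(\mathsf d) = x^\tau \to 0$ as $x \to 0^+$. With all three hypotheses in hand, Theorem~\ref{lacunaespectral} furnishes the spectral gap for $\mathscr L_f$ acting on $\mathscr C_\Omega(\mathbb T)$ for any $f \in \mathscr C_\omega(\mathbb T)$, and Theorem~\ref{decaimentoexponencial} then delivers the exponential decay of correlations against the unique associated equilibrium state.

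There is essentially no analytic obstacle here: the statement is a calibrated specialization of the general machinery, and the only point demanding care is that the slowly varying factor $L$ cancels exactly in the ratio $V/\omega$, so that the regular variation of $V$ --- and not any finer regularity of $L$ --- is all that enters the verification of~\ref{condicao_suficiente}. The borderline exponent $\tau = 1-\sigma$ is admissible precisely because it keeps $\sigma - 1 + \tau = 0$, leaving the left-hand side equal to the positive constant $(1+c)^\tau - 1$; for $\tau < 1-\sigma$ the expression even diverges as $x \to 0^+$, so the stated range of admissible H\"older exponents is sharp within this argument.
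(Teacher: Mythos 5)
Your proposal is correct and follows essentially the same route as the paper: invoke Karamata's representation $V(x)=x^\sigma L(x)$, observe the cancellation $V/\omega = x^{\sigma-1}$, verify condition~\ref{condicao_suficiente} via the identity $x^{\sigma-1+\tau}\big((1+c)^\tau-1\big)$, and then apply Theorems~\ref{lacunaespectral} and~\ref{decaimentoexponencial}. The only difference is that you explicitly check the concavity and orderly-vanishing hypotheses for $\Omega(x)=x^\tau$, which the paper leaves implicit since both are immediate for power functions.
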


Perhaps the meaning of this proposition should be evaluated in light of the results of Holland \cite{Hol05} on subexponential mixing rates for intermittent maps of the circle. Notably, \cite[Theorem 2]{Hol05} details 
special situations, such as logarithmic case and intermediate logarithmic case, for decays of correlations of
H\"older potentials (of any exponent) with respect to a $T$-invariant, absolutely continuous and physical measure.
Proposition~\ref{sigma menor que 1} then seems to suggest\footnote{Note that the equilibrium states of Proposition~\ref{sigma menor que 1} are also Gibbs measures \cite[Proposition~16]{GI22}, and therefore give positive mass to every open set. Nevertheless, the fact that $ V $ is taken as increasing on $ [0, 1] $ implies that $ T $ is not differentiable over the manifold $ \mathbb T $ at the indifferent fixed point. In particular, this precludes invoking an immediate link between equilibrium measures and physical measures.} that, by restricting to a subspace of  H\"older observables with properly bounded exponent, contrary to the slow decay rate established for H\"older observables in general, the respective correlations would decay exponentially fast to zero. This is a subject to be investigated more closely. 
A quantitative understanding of the spectra of transfer operators could open a way forward. See \cite{BKL22} for an interesting discussion on this topic in different dynamic contexts.


	\footnotesize{
		
	}
\end{document}